\def\N{\mathbb N}
\def\Z{\mathbb Z}
\def\Q{\mathbb Q}
\def\R{\mathbb R}
\def\A{\mathcal A}
\def\B{\mathcal B}
\def\S{\mathcal S}
\def\I{\mathcal I}
\def\pfz{\begin{proof}}
\def\pfk{\end{proof}}
\def\bul{{\scriptstyle\bullet}}
\newtheorem{lem}{Lemma}
\newtheorem{thm}[lem]{Theorem}
\newtheorem{prop}[lem]{Proposition}
\newtheorem{coro}[lem]{Corollary}
\newtheorem{de}[lem]{Definition}
\newtheorem{pozn}[lem]{Remark}
\begin{document}

\begin{frontmatter}

\title{Description of spectra of quadratic Pisot units}

\author{Zuzana Mas\'akov\'a, Kate\v rina Pastir\v c\'akov\'a, Edita Pelantov\'a}
\address{Department of Mathematics FNSPE, Czech Technical University in Prague\\ 
Trojanova 13, 120 00 Praha 2, Czech Republic}

\cortext[]{Corresponding author: Zuzana Mas\'akov\'a (\tt{zuzana.masakova@fjfi.cvut.cz})}

\date{\today}

\begin{abstract}
The spectrum of a real number $\beta>1$ is the set $X^{m}(\beta)$
of $p(\beta)$ where $p$ ranges over all polynomials with
coefficients restricted to $\A=\{0,1,\dots,m\}$. For a quadratic
Pisot unit $\beta$, we determine the values of all distances
between consecutive points and their corresponding frequencies, by
recasting the spectra in the frame of the cut-and-project scheme.
We also show that shifting the set $\A$ of digits so that it
contains at least one negative element, or considering negative
base $-\beta$ instead of $\beta$, the gap sequence of the
generalized spectrum is a coding of an exchange of three
intervals.
\end{abstract}

\begin{keyword}
Pisot numbers\sep spectrum\sep interval exchange
\MSC[2010] 11K16\sep  11A63
\end{keyword}

\end{frontmatter}


\section{Introduction}

The spectrum of a real number $\beta>1$ is the set of $p(\beta)$
where $p$ ranges over all polynomials with coefficients restricted
to a finite set of consecutive integers, in particular,
\begin{equation}\label{eq:spectrum}
X^{m}(\beta) = \Big\{\sum_{j=0}^n a_j \beta^j : n\in\N,\, a_j\in\{0,1,\dots,m\}\Big\}\,.
\end{equation}
The study of such sets for $\beta\in(1,2)$ and $m=1$ was initiated
by Erd\H{o}s et al. in 1990~\cite{ErdosJooKomornik1990}. Their
interest~\cite{ErdosJooKomornik1998,ErdosKomornik1998} is to study
the difference sequence $(y_{k+1}-y_k)_{k\in\N}$, where
$X^m(\beta)=\{0=y_0<y_1<y_2<\cdots\}$, in particular the values
$$
l^m(\beta) = \liminf_{k\to\infty} (y_{k+1} - y_k)\quad\text{ and }\quad L^m(\beta) = \limsup_{k\to\infty} (y_{k+1} - y_k)\,.
$$
The relevance of Pisot numbers in the problem of spectra was
indicated by Bugeaud in 1996~\cite{Bugeaud1996} who showed that a real
$\beta\in(1,2)$ is a Pisot number if and only if $l^m(\beta)>0$
for all $m\geq 1$. An important recent contribution is due to
Feng~\cite{Feng} who shows that $l^m(\beta)$ is positive if and
only if $\beta$ is a Pisot number or $m< \beta-1$. Many
other authors have contributed to the problem; for an exhaustive
overview, see the paper by Akiyama and
Komornik~\cite{AkiyamaKomornik2013}.

A particular question is to determine the exact values of
$l^m(\beta)$, $L^m(\beta)$. First to give such a result for
general $m\geq 1$ were Komornik et al.\ in
2000~\cite{KomornikLoretiPedicini2000} who provided $l^m(\beta)$
for the golden ratio $\beta=\frac12(1+\sqrt5)$. Komatsu in
2002~\cite{Komatsu} and independently Borwein and Hare in
2003~\cite{BorweinHare2003} extended the result to all quadratic
Pisot units. Their method, however, does not testify about the
structure of the gap sequence $(y_{k+1}-y_k)_{k\in\N}$, in
particular, nor about $L^m(\beta)$.

In 2002, Bugeaud~\cite{Bugeaud2002} provides a substitution that
can be used for generating the difference sequence for the
spectrum $X^1(\beta)$ of the so-called $d$-bonacci numbers $\beta>1$,
zeros of $x^d-x^{d-1}-\cdots-x-1$. This allows him
to determine all gaps and the corresponding frequencies. In the
same year, Feng and Wen~\cite{FengWen02} showed that for any Pisot
number $\beta$ and $m>\beta-1$, the sequence of distances
$(y_{k+1}-y_{k})_{k\in\N}$ in $X^{m}(\beta)$ can be generated by a
substitution over a finite alphabet. Their proof is constructive,
however, does not provide any explicit prescription for the
substitution nor for the values of distances and their
frequencies. Moreover, the cardinality of the alphabet of the
substitution found by their construction grows rapidly with $m$.
The construction of Feng and Wen was used in 2006 by Garth and
Hare~\cite{GarthHare2006} for determining the substitution, gap
sizes and their frequencies explicitly for the spectra
$X^{\lfloor\beta\rfloor}(\beta)$, where $\beta>1$ is a zero
of $x^d-px^{d-1}-\cdots -px - q$, $p\geq q\geq 1$.

Our main result concerns the distance sequence for the spectra
$X^m(\beta)$ of quadratic Pisot units $\beta$. Unlike the previous
results, we consider arbitrary $m\in\N$, $m>\beta-1$. We show that
the distances $y_{k+1}-y_k$ take (up to finitely many exceptions)
only three values.

%

\begin{thm}\label{t:1}
Let $\beta$ be a quadratic Pisot unit, $m\in\N$, $m>\beta-1$. Then
there exist $\Delta_1$, $\Delta_2>0$ such that the distances
$y_{k+1}-y_{k}$ between consecutive points in $X^m(\beta)$ take
values in $\{\Delta_1,\Delta_2,\Delta_1+\Delta_2\}$, up to
finitely many exceptions.
\end{thm}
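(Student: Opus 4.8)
The plan is to recast $X^m(\beta)$ as, up to finitely many points, a one-dimensional cut-and-project set, and then to read off its gaps from the known three-distance structure of such sets.

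First I would fix the arithmetic of $\beta$. A quadratic Pisot unit is the dominant root either of $x^2=ax+1$ with $a\ge1$, in which case its Galois conjugate is $\beta'=-1/\beta\in(-1,0)$, or of $x^2=ax-1$ with $a\ge3$, in which case $\beta'=1/\beta\in(0,1)$. In both cases $\beta$ is a unit, so $\Z[\beta]=\Z[1/\beta]$ is a ring and the algebraic conjugation $\star\colon x\mapsto x'$ is a ring automorphism of $\Z[\beta]$; the embedding $x\mapsto(x,x')$ realises $\Z[\beta]$ as a lattice $L\subset\R^2$. For $x=\sum_{j=0}^n a_j\beta^j\in X^m(\beta)$ we have $x'=\sum_j a_j(\beta')^j$, and since $|\beta'|<1$ and $0\le a_j\le m$, all the conjugates $x'$ lie in the bounded interval $\Omega$ defined as the closure of $\{\sum_{j\ge0}a_j(\beta')^j:a_j\in\A\}$. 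This already gives the easy inclusion $X^m(\beta)\subseteq\Sigma(\Omega)$, where $\Sigma(\Omega)=\{x\in\Z[\beta]:x\ge0,\ x'\in\Omega\}$ is a one-sided cut-and-project set; the hypothesis $m>\beta-1$ is exactly what forces the attractor $\Omega$ to be a genuine interval rather than a Cantor set, since then the images of consecutive admissible digits overlap.

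The crucial and hardest step is the reverse inclusion up to a finite set: I would show that every $x\in\Z[\beta]$ with $x\ge0$ and $x'\in\Omega$, apart from possibly finitely many lying near the origin, actually admits a representation $\sum a_j\beta^j$ with $a_j\in\A$, so that $X^m(\beta)$ and $\Sigma(\Omega)$ coincide outside a bounded region. The mechanism is a redundant (non-greedy) numeration argument: starting from $x$ one repeatedly subtracts an admissible digit times the highest available power of $\beta$, simultaneously decreasing the real part and keeping the conjugate inside $\Omega$; the overlap produced by $m>\beta-1$ guarantees that at each step some admissible digit preserving $x'\in\Omega$ is available, and the contraction $|\beta'|<1$ drives a termination argument. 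I expect this to be the main obstacle, as it demands a case-dependent analysis of how subtraction moves points of $L$ relative to the window $\Omega$, together with an explicit bound on the exceptional region.

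Finally I would read off the gaps. For a one-dimensional cut-and-project set with an interval window, the distances between consecutive points take at most three values, the largest being the sum of the two smaller: if $x<y$ are neighbours in $\Sigma(\Omega)$ then $y-x\in\Z[\beta]$ is positive with $(y-x)'\in\Omega-\Omega$, and minimality forces $y-x$ to be one of two \emph{atomic return vectors} $v_1,v_2$ (with $v_1'>0>v_2'$ and $|v_1'|,|v_2'|\le|\Omega|\le|v_1'|+|v_2'|$) or their sum $v_1+v_2$. Setting $\Delta_1=v_1$ and $\Delta_2=v_2$ (their real parts, both positive and determined by $|\Omega|$ together with the continued-fraction data of $\beta'$) gives the three admissible gaps $\{\Delta_1,\Delta_2,\Delta_1+\Delta_2\}$. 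Since $X^m(\beta)$ agrees with $\Sigma(\Omega)$ outside a bounded interval, its distance sequence inherits this structure with only finitely many exceptions, which is precisely the assertion of the theorem.
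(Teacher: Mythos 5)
Your first and third steps are sound and match the paper's framework (Proposition~\ref{p:XY} for the inclusion $X^m(\beta)\subset\Sigma_\beta(\Omega)$, and Propositions~\ref{p:cap3}--\ref{p:mezerycap} for the three-distance structure of a cut-and-project set). The problem is your ``crucial and hardest step'': the claim that every sufficiently large $x\in\Z[\beta]$ with $x'\in\Omega$ admits a digit representation over $\{0,1,\dots,m\}$, so that $X^m(\beta)$ and $\Sigma_\beta(\Omega)$ coincide outside a bounded set, is \emph{false} in general. This is exactly the flaw in Hare's 2004 argument that Section~\ref{sec:flaw} of the paper is devoted to exposing. Concretely (Corollary~\ref{c:1}), for $\beta^2=p\beta-1$ the numbers $y_k=-\beta^k+m(\beta^{k-1}+\cdots+\beta+1)$, $k\geq1$, tend to $+\infty$, lie in $\Sigma_\beta(\Omega)$ (their conjugates increase towards the right endpoint $\frac{m\beta}{\beta-1}$ of $\Omega$), yet none of them belongs to $X^m(\beta)$; an analogous family $-\beta^{2k}+m\sum_{i=0}^{k-1}\beta^{2i}$ works for $\beta^2=p\beta+1$ whenever $m>\lfloor\beta\rfloor$ (Corollary~\ref{c:2}). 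So no termination argument for a redundant numeration can close the gap: the overlap condition $m>\beta-1$ makes the window an interval, but it does not make every lattice point of the strip reachable using nonnegative powers of $\beta$ only, and the exceptional points are infinite in number and unbounded. Your proposed identity holds only in the single case $m=\lfloor\beta\rfloor$, $\beta^2=p\beta+1$.

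What rescues the theorem, and what your proposal is missing, is that these exceptions accumulate only at the boundary of the window in the conjugate direction. The paper quantifies this (Lemmas~\ref{l:oS1}--\ref{l:oS2} and Proposition~\ref{p:classical}): for every $\delta>0$ there is $K>0$ with $[K,+\infty)\cap\Sigma_\beta\bigl((1-\delta)\Omega\bigr)\subset X^m(\beta)\subset\Sigma_\beta(\Omega)$, i.e.\ the spectrum is sandwiched between two cut-and-project sets with slightly different window lengths. One then needs the further input of Proposition~\ref{p:mezerycap}, that the gap triple of $\Sigma_\beta(\Omega)$ depends only on $|\Omega|$ and is locally constant in $|\Omega|$ off a discrete set of critical lengths, so that for $\delta$ small the inner and outer sets share the same values $\{\Delta_1,\Delta_2,\Delta_1+\Delta_2\}$, which is then forced on the set squeezed between them. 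Without this two-window argument (or some substitute for it), your proof does not go through.
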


The explicit values of $\Delta_1, \Delta_2$ and the frequencies of
the gaps $\Delta_1,\Delta_2,\Delta_1+\Delta_2$, dependently on
$m$, are given in Theorem~\ref{t:explicit}. Let us mention that
for the particular case of the golden ratio, and any $m\geq 1$, this
was already given in~\cite{Hare2004}. However, the proof given by
the author contains a flaw, as explained in
Section~\ref{sec:flaw} of the present paper.

Our method in proof of Theorem~\ref{t:1} is the use of infinite
words coding exchange of three intervals, the so-called 3iet
words. More precisely, we show that the sequence of gaps in
$X^m(\beta)$ `almost' coincides with some 3iet word. Although the
coincidence is broken at infinitely many places (see
Proposition~\ref{p:nekonecneporuseni}), the frequency of such
perturbations is 0. This is the reason why the values for gaps and
their frequencies given in~\cite{Hare2004} are correct.

It is interesting to mention that a perfect coincidence with 3iet
words can be achieved when considering a slightly generalized
problem, in particular, when shifting the alphabet to contain both
positive and negative digits, or considering negative base. With
such assumptions, the spectrum is distributed over all the real
line. Let $\alpha$ be real, $|\alpha|>1$, and let $\A\ni0$ be a
finite set of consecutive integers. Set
$$
X^{\A}(\alpha) = \Big\{\sum_{j=0}^n a_j \alpha^j : n\in\N,\, a_j\in\A\Big\}\,.
$$
With such a notation, we will prove the following statement.

\begin{thm}\label{t:2}
Let $\beta$ be a quadratic Pisot unit, and let $\A\ni0$ be a finite set of consecutive integers, $\#\A>\beta$.  Let
$$
\alpha = -\beta\quad\text{ or }\quad
\alpha = \beta \text{ and } \{-1, 0,1\} \in\A\,.
$$
Then there exist $\Delta_1$, $\Delta_2>0$ such that the distances
between consecutive points in $X^\A(\alpha)$ take values in
$\{\Delta_1,\Delta_2,\Delta_1+\Delta_2\}$. Moreover, the
bidirectional sequence of gaps in $X^\A(\alpha)$ is a coding of
exchange of three intervals.
\end{thm}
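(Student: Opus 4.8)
The plan is to realise $X^\A(\alpha)$ as a cut-and-project set with a connected (interval) acceptance window, and then to quote the standard description of one-dimensional cut-and-project sets in terms of exchanges of three intervals.

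First I would set up the cut-and-project scheme built on $\Z[\beta]$. Write $x\mapsto x'$ for the Galois conjugation of $\Q(\beta)$, so that $\beta'$ is the second root of the minimal polynomial, $|\beta'|<1$, and embed $\Z[\beta]$ as the lattice $\{(x,x'):x\in\Z[\beta]\}\subset\R^2$. Put $\alpha'=\beta'$ or $-\beta'$ according to the sign of $\alpha$, so that $|\alpha'|<1$. The set $X\definl X^\A(\alpha)$ obeys the self-affine identity $X=\bigcup_{a\in\A}(a+\alpha X)$, since splitting off the constant digit of a polynomial is a bijection. Conjugating, the set $X'\definl\{x':x\in X\}$ is invariant under the contractions $t\mapsto a+\alpha' t$, $a\in\A$, so its closure $\Omega\definl\overline{X'}$ is the attractor of this iterated function system and satisfies $\Omega=\bigcup_{a\in\A}(a+\alpha'\Omega)$.

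Next I would prove that $\Omega$ is a genuine interval and that $X$ fills it. The $\#\A$ contracted copies $a+\alpha'\Omega$ are translates spaced by the consecutive integers of $\A$; a short computation reduces the requirement that they cover $\Omega$ without gaps to $\#\A\ge|\alpha'|^{-1}=\beta$ (using $|\beta\beta'|=1$, as $\beta$ is a unit), which the hypothesis $\#\A>\beta$ guarantees, so $\Omega=[c,d]$ is a closed interval. The heart of the matter is then the identity $X=\Sigma(\Omega)$, where $\Sigma(\Omega)\definl\{x\in\Z[\beta]:x'\in\Omega\}$ is taken with the appropriate half-open window. The inclusion $X\subseteq\Sigma(\Omega)$ is immediate from $X'\subseteq\Omega$. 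For the converse I would expand a given $y\in\Z[\beta]$ with $y'\in\Omega$ in base $\alpha$ with digits in $\A$: as $\beta$ is a unit, $\alpha^{-1}\in\Z[\beta]$, so the map $y\mapsto(y-a)/\alpha$ preserves $\Z[\beta]$ for every digit $a$; the covering $\Omega=\bigcup_{a}(a+\alpha'\Omega)$ always supplies a digit $a\in\A$ keeping the conjugate in $\Omega$; and because $|\alpha|>1$ contracts the physical coordinate while the conjugate stays bounded, the successive pairs $\big((y-a)/\alpha,\,(y'-a)/\alpha'\big)$ remain in a fixed bounded region of the discrete lattice, so the expansion is eventually periodic and, by the finiteness property of quadratic Pisot units, terminates at $0$.

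This termination argument is the step I expect to be the main obstacle, and it is precisely here that the hypotheses of the theorem do their work: a negative base $\alpha=-\beta$, or a digit alphabet containing $\{-1,0,1\}$, forces $X$ to be unbounded in both directions, so that no one-sided truncation occurs and $X$ can fill the whole window $\Omega$. This is the feature absent in Theorem~\ref{t:1}, where the one-sided alphabet $\{0,\dots,m\}$ effectively cuts the cut-and-project set at $0$ and thereby produces the exceptional gaps. Once $X=\Sigma(\Omega)$ is established for an interval window, I would finish by invoking the standard description of such sets on the line. Ordering $\Sigma(\Omega)=\{\dots<x_{-1}<x_0<x_1<\dots\}$, the step map $x_k\mapsto x_{k+1}$ induces on the conjugates the map $x_k'\mapsto x_{k+1}'=x_k'+(x_{k+1}-x_k)'$, which partitions $\Omega$ into the at most three subintervals on which the increment $x_{k+1}-x_k$ is constant and which it rearranges by translation, namely an exchange of three intervals. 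The increments themselves then take exactly the three values $\Delta_1,\Delta_2,\Delta_1+\Delta_2$, and the bidirectional gap sequence is the itinerary of the orbit $(x_k')_k$ under this interval exchange, that is, a 3iet word. Both assertions of the theorem follow.
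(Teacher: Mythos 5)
Your overall strategy is the same as the paper's: identify $X^\A(\alpha)$ with a cut-and-project set $\Sigma_\beta(\Omega)$ whose window is the interval of numbers representable in base $\gamma=1/\alpha'$ over $\A$, and then quote the fact (Proposition~\ref{p:cap3}) that the gap sequence of such a set is a 3iet word. The set-up, the inclusion $X^\A(\alpha)\subseteq\Sigma_\beta(\Omega)$, the verification that $\#\A>\beta$ makes the window an interval, and the final reduction to interval exchanges are all fine and match Sections~\ref{sec:cap} and~\ref{sec:modified}.

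The gap is in the converse inclusion, which you yourself flag as ``the main obstacle'' but then dispose of in one sentence. Boundedness of the pairs $\big(y_k,y_k'\big)$ in the lattice gives only that the digit expansion is \emph{eventually periodic}; to conclude $y\in X^\A(\alpha)$ you must show that the only cycle the orbit can enter is the fixed point $0$, and neither of your two justifications does that. Appealing to ``the finiteness property of quadratic Pisot units'' is circular: property (F) is known for the greedy expansion with the canonical alphabet, whereas here the base may be $-\beta$, the alphabet is an arbitrary shifted set of consecutive integers, and the digit is chosen by the covering $\Omega=\bigcup_a(a+\alpha'\Omega)$ rather than greedily --- the finiteness of exactly these expansions is the statement to be proved. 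The heuristic that two-sidedness of $X$ ``lets it fill the window'' is also not a mechanism: Proposition~\ref{p:nekonecneporuseni} shows that for $\A=\{0,\dots,m\}$ with $m>\lfloor\beta\rfloor$ infinitely many points of $\Sigma_\beta(\Omega)$ are missing from the spectrum even on $[K,+\infty)$, so nontrivial cycles (equivalently, lattice points with only infinite expansions) really do occur for some alphabets, and an argument must explain why they cannot occur under the hypotheses $\alpha=-\beta$ or $\{-1,0,1\}\subseteq\A$. The paper's Proposition~\ref{p:modified} supplies exactly this: it tracks the conjugate orbit $z_k=w_k'$ under $z_k=(z_{k-1}-D)/(\pm\beta)$, shows it is attracted into an explicit short interval such as $\big[-\tfrac{B}{\beta-1},-\tfrac{b}{\beta-1}\big]$, and then solves the simultaneous inequalities $w_n=c+d\beta\in I$, $z_n=c+d\beta'\in$ (that short interval) to conclude $(c,d)=(0,0)$ (or $(0,0),(1,0)$); it is precisely here that $b\le-1$ and $B\ge1$, i.e.\ $\{-1,0,1\}\subseteq\A$, are used. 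Your proof needs this (or an equivalent) argument to be complete.
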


The correspondence of spectra with 3iet words is established using
the so-called cut-and-project scheme, introduced in the following
section.

\section{Cut-and-project sets}\label{sec:cap}

When $\beta$ is an algebraic integer of degree $d$, the elements
of the spectra are clearly expressible as elements of the set
$\Z[\beta]=\Z+\Z\beta+\cdots+\Z\beta^{d-1}$. If, moreover, $\beta$
is a Pisot number (algebraic integer $>1$ with conjugates of modulus strictly less than $1$), one can easily show that the Galois image of
the spectrum of a Pisot number is bounded.

In the following proposition, we determine the interval in which
the Galois image of the spectrum is contained in case that $\beta$
is a quadratic Pisot unit.

\begin{prop}\label{p:XY}
Let $\beta$ be a quadratic Pisot unit, $m\in\N$.  Then
$$
X^{m}(\beta)\subset \{x\in\Z[\beta] : x'\in\Omega\}\,,
$$
where $x'$ stands for the Galois image of $x$ in the field $\Q(\beta)$ and
$$
\begin{aligned}
\Omega &=
\Big(-\frac{m\beta}{\beta^2-1},\frac{m\beta^2}{\beta^2-1}\Big)  \text{ when } \beta^2=p\beta+1\,,\text{ and }\\[2mm]
\Omega &=
\Big[0,\frac{m\beta}{\beta-1}\Big)  \text{ when } \beta^2=p\beta-1\,.
\end{aligned}
$$
\end{prop}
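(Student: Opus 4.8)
The plan is to compute the Galois conjugate $\beta'$ explicitly in each of the two cases and then bound $x'=\sum_{j=0}^n a_j(\beta')^j$ by comparison with a geometric series. A quadratic Pisot unit has minimal polynomial $x^2-px-1$ or $x^2-px+1$; since the product of the two roots equals the constant term, in the first case ($\beta^2=p\beta+1$) the conjugate is $\beta'=-1/\beta\in(-1,0)$, while in the second case ($\beta^2=p\beta-1$) it is $\beta'=1/\beta\in(0,1)$. In both cases $|\beta'|=1/\beta<1$, so every geometric series in $\beta'$ converges and the bounds below are finite.

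First I would use that Galois conjugation $x\mapsto x'$ is a field automorphism of $\Q(\beta)$, so for $x=\sum_{j=0}^n a_j\beta^j$ with $a_j\in\{0,1,\dots,m\}$ one has $x'=\sum_{j=0}^n a_j(\beta')^j$. This reduces the statement to bounding such finite sums over all admissible choices of digits $a_j$. In the case $\beta^2=p\beta-1$ every power $(\beta')^j$ is positive, so $x'\ge 0$ with equality exactly when all $a_j=0$, i.e.\ $x=0$; and since $a_j\le m$ and only finitely many terms are present, $x'\le\sum_{j=0}^n m(\beta')^j<\sum_{j=0}^\infty m(\beta')^j=\frac{m\beta}{\beta-1}$, the strict inequality coming from the omitted positive tail. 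This gives $\Omega=[0,\frac{m\beta}{\beta-1})$.

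In the case $\beta^2=p\beta+1$ the sign of $(\beta')^j$ alternates: terms with even $j$ are nonnegative and terms with odd $j$ are nonpositive. To bound $x'$ from above I would keep only the even-indexed terms with $a_j=m$, and to bound it from below keep only the odd-indexed terms with $a_j=m$; since only finitely many terms occur, comparison with the full alternating geometric series yields the strict bounds $-\frac{m\beta}{\beta^2-1}<x'<\frac{m\beta^2}{\beta^2-1}$, using $\frac{m}{1-\beta'^2}=\frac{m\beta^2}{\beta^2-1}$ and $\frac{m\beta'}{1-\beta'^2}=-\frac{m\beta}{\beta^2-1}$. This is exactly the open interval $\Omega$ of the statement.

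The computations are routine geometric-series evaluations; the only point requiring care is the openness of the endpoints. Since every element of the spectrum is a polynomial with finitely many nonzero digits, the extremal values of the infinite series are never attained, which forces the strict inequalities and hence the open ends (the sole exception being the attained value $0$ in the second case). I expect this strictness argument, rather than any delicate estimate, to be the main thing to get right.
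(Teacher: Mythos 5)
Your proposal is correct and follows essentially the same route as the paper: identify $\beta'=-1/\beta$ (resp.\ $1/\beta$) in the two cases, apply the Galois automorphism termwise, and bound $x'$ by comparison with the geometric series $\sum_{i\ge0} m/\beta^{2i}$ and $-\sum_{i\ge0} m/\beta^{2i+1}$ (resp.\ $\sum_{i\ge0} m/\beta^{i}$), with strictness coming from the omitted tail. Your extra remark on why $0$ is attained in the case $\beta^2=p\beta-1$ is a correct and welcome clarification of why that endpoint is closed.
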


\pfz
Take $x=\sum_{i=0}^na_i\beta^i\in X^m(\beta)$. Obviously, $x\in\Z[\beta]=\Z+\Z\beta$. Taking the Galois image, we have
$x'=\sum_{i=0}^na_i{\beta'}^i$. If $\beta$ satisfies $\beta^2=p\beta+1$, we have $\beta'=-\frac1\beta$, and thus
$$
-\frac{m\beta}{\beta^{2}-1}=\sum_{i=0}^\infty \frac{m}{-\beta^{2i+1}}<x'=\sum_{i=0}^n\frac{a_i}{(-\beta)^i} < \sum_{i=0}^\infty \frac{m}{\beta^{2i}}=
\frac{m\beta^2}{\beta^2-1}\,.
$$
The case $\beta^2=p\beta-1$ is proven similarly, taking into account that $\beta'=\frac1\beta$.
\pfk

The spectrum $X^{m}(\beta)$ lies in $\Z[\beta]$, and the pairs
$(x,x')$ for $x\in\Z[\beta]$ form a lattice. The above proposition
states that the pairs $(x,x')$ with $x\in X^{m}(\beta)$ belong to
a strip of a bounded width cut from the lattice. Such
considerations give a motivation for the definition of a cut-and-project set.

\begin{de}\label{d:cap}
Let $\varepsilon,\eta$ be irrational, $\varepsilon\neq\eta$ and
$\Omega$ a bounded interval. Let
$\star:\Z[\eta]\to\Z[\varepsilon]$ be the isomorphism between
additive groups $\Z[\eta]=\Z+\Z\eta$ and
$\Z[\varepsilon]=\Z+\Z\varepsilon$ given by
$(a+b\eta)^\star=a+b\varepsilon$. The set
$$
\Sigma_{\varepsilon,\eta}(\Omega) = \{ x\in\Z[\eta] : x^\star
\in\Omega \}
$$
is called a cut-and-project set with acceptance interval $\Omega$.
\end{de}

Note that the above definition is a very special case of a rather
general concept~\cite{Moody} of model sets arising by projection
of a $d$-dimensional lattice to a suitably oriented
lower-dimensional subspace of $\R^d$.
As shown in~\cite{GuMaPeBordeaux}, the sequence of gaps in a
cut-and-project set can be generated using the transformation of
exchange of 2 or 3 intervals.

\begin{de}\label{d:3iet}
Let \ $0<\lambda\leq \mu<1$. The transformation $T:[0,1)\to[0,1)$
defined by
$$
T(x)=\begin{cases}
x+1-\lambda  & \text{ for }x\in[0,\lambda)=:I_A\\
x+1-\lambda-\mu & \text{ for }x\in[\lambda,\mu)=:I_B\\
x-\mu & \text{ for }x\in[\mu,1)=:I_C
\end{cases}
$$
is called an exchange of 3 intervals, if $\lambda<\mu$, (and
exchange of 2 intervals, when $\lambda=\mu$).
\end{de}

For an arbitrary $\rho\in[0,1)$, the orbit of $\rho$ under $T$ can
be coded by a bidirectional infinite word ${\bf u}=\cdots
u_{-2}u_{-1}u_0u_1u_2\cdots$ over a ternary alphabet, say
$\{A,B,C\}$, (or a binary alphabet $\{A,C\}$), in a natural way,
$$
u_n=X\quad\text{ if }T^n(\rho)\in I_X\,.
$$
The infinite word ${\bf u}$ is called a 3iet word. If the
parameters $1-\lambda$ and $\mu$ are linearly independent over $\Q$,
then the frequency of letters $A,B,C$ in the infinite word ${\bf
u}$ is equal to the length of the corresponding intervals
$I_A,I_B,I_C$, respectively. The following is a result
of~\cite{GuMaPeBordeaux} describing the distances between
consecutive elements of a cut-and-project set and their ordering.

\begin{prop}\label{p:cap3}
Let $\varepsilon,\eta$ be irrational, $\varepsilon\neq\eta$ and
$\Omega$ a bounded interval. Then there exist positive
$\Delta_1,\Delta_2\in\Z[\eta]$, satisfying
$\Delta_2^\star<0<\Delta_1^\star$, such that the distances between
consecutive elements of $\Sigma_{\varepsilon,\eta}(\Omega)$ take
values in $\{\Delta_1,\Delta_2,\Delta_1+\Delta_2\}$. Moreover, if
the gaps $\Delta_1$, $\Delta_2$, and $\Delta_1+\Delta_2$ are coded
by letters $A,C$, and $B$ respectively, then we obtain a 3iet word
coding an exchange $T$ of three intervals with parameters
$\lambda=1-{\Delta_1^\star}/{|\Omega|}$,
$\mu={-\Delta_2^\star}/{|\Omega|}$, where $|\Omega|$ stands for
the length of the interval $\Omega$.
\end{prop}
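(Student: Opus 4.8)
The plan is to realize $\Sigma_{\varepsilon,\eta}(\Omega)$ as the projection of a genuine lattice and read off the gap structure from a ``first-return'' analysis on the window. Since $\varepsilon\neq\eta$, the map $x=a+b\eta\mapsto(x,x^\star)=(a+b\eta,a+b\varepsilon)$ sends $\Z[\eta]$ to a rank-two lattice $L\subset\R^2$ (its matrix has determinant $\varepsilon-\eta\neq0$), and $\Sigma_{\varepsilon,\eta}(\Omega)$ is exactly the first-coordinate projection of the points of $L$ in the strip $\R\times\Omega$. After translating we may assume $\Omega=[0,\ell)$ with $\ell=|\Omega|$; writing $s_k=x_k^\star$ for the consecutive points $\cdots<x_{-1}<x_0<x_1<\cdots$ of $\Sigma_{\varepsilon,\eta}(\Omega)$, the gap $x_{k+1}-x_k$ is the smallest positive $u\in\Z[\eta]$ with $s_k+u^\star\in[0,\ell)$. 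I would define the two candidate gaps as
\[
\Delta_1=\min\{u\in\Z[\eta]:u>0,\ 0<u^\star<\ell\},\qquad
\Delta_2=\min\{u\in\Z[\eta]:u>0,\ -\ell<u^\star<0\},
\]
the minima existing because $L$ is discrete and each defining region contains only finitely many lattice points with bounded first coordinate. By construction $\Delta_1^\star\in(0,\ell)$ and $\Delta_2^\star\in(-\ell,0)$.

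The first real step is the inequality $\Delta_1^\star-\Delta_2^\star\geq\ell$, equivalently $\lambda\leq\mu$ for the announced parameters $\lambda=1-\Delta_1^\star/\ell$ and $\mu=-\Delta_2^\star/\ell$. I would argue by contradiction: assuming $\Delta_1^\star-\Delta_2^\star<\ell$, the nonzero element $\delta=\Delta_1-\Delta_2$ has $\delta^\star=\Delta_1^\star-\Delta_2^\star\in(0,\ell)$; if $\delta>0$ then $\delta<\Delta_1$ contradicts the minimality of $\Delta_1$, while if $\delta<0$ then $-\delta>0$ satisfies $-\delta<\Delta_2$ and $(-\delta)^\star\in(-\ell,0)$, contradicting the minimality of $\Delta_2$. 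This ``subtraction trick'' is the engine of the whole proof.

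Next I would partition $[0,\ell)$ at the points $\ell-\Delta_1^\star$ and $-\Delta_2^\star$ into $I_A=[0,\ell-\Delta_1^\star)$, $I_B=[\ell-\Delta_1^\star,-\Delta_2^\star)$, $I_C=[-\Delta_2^\star,\ell)$, and show that the next-point map acts on the star-coordinate exactly as the interval exchange that adds $\Delta_1^\star$ on $I_A$, adds $\Delta_1^\star+\Delta_2^\star$ on $I_B$, and adds $\Delta_2^\star$ on $I_C$. For each region one checks first that the proposed step keeps $s_k$ inside $[0,\ell)$ (this is where $\Delta_1^\star-\Delta_2^\star\geq\ell$ is needed, to make the image of $I_B$ consistent), and then---the main obstacle---that no strictly smaller positive $u$ is admissible. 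The latter is again the subtraction trick: any admissible $u$ below the proposed step, after subtracting $\Delta_1$ or $\Delta_2$, yields a positive element whose star lies in $(0,\ell)$ or $(-\ell,0)$ yet is too small, contradicting the definition of $\Delta_1$ or $\Delta_2$; one also verifies directly that $\Delta_1$ fails the window test on $I_B\cup I_C$ and $\Delta_2$ fails on $I_A\cup I_B$.

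Having identified the next-point map with this interval exchange, rescaling by $\ell$ turns it into the transformation $T$ of Definition~\ref{d:3iet} with the stated $\lambda,\mu$ (a genuine exchange of three intervals when $\lambda<\mu$, degenerating to two when $\lambda=\mu$). The gaps therefore take only the values $\Delta_1,\Delta_2,\Delta_1+\Delta_2$, occurring precisely when $s_k$ lies in $I_A,I_C,I_B$; coding these by $A,C,B$ makes the bidirectional gap sequence equal to the coding of the $T$-orbit of $s_0/\ell$, that is, a 3iet word. The translation of $\Omega$ plays no role, since $\Delta_1,\Delta_2$ and every admissibility test depend only on $\ell$ and on star-differences, so the argument is independent of the window's position. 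The one genuinely delicate part throughout is the minimality bookkeeping in the three regions; everything else reduces to discreteness of $L$ and the single subtraction identity.
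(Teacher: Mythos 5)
The paper does not actually prove Proposition~\ref{p:cap3}: it is quoted as a result of the reference [GuMaPeBordeaux], so there is no in-text argument to compare yours against. On its own merits, your proposal is a correct reconstruction of the standard proof of this fact: defining $\Delta_1$ (resp.\ $\Delta_2$) as the smallest positive element of $\Z[\eta]$ whose star image lies in $(0,|\Omega|)$ (resp.\ $(-|\Omega|,0)$), deriving $\Delta_1^\star-\Delta_2^\star\geq|\Omega|$ by the subtraction trick, and then checking on the three subintervals of the window that the successor map adds exactly $\Delta_1^\star$, $\Delta_1^\star+\Delta_2^\star$, or $\Delta_2^\star$. I verified that the minimality bookkeeping you defer does close up in all three regions; the one caveat is that the subtraction has to be applied in the right direction --- e.g.\ for $s_k\in I_A$ and a competitor $u<\Delta_1$ with $u^\star<0$, the contradiction comes from $v=\Delta_1-u$, which satisfies $v>0$ and $v^\star\in(0,|\Omega|)$ yet $v<\Delta_1$ is false only after noting $v\geq\Delta_1$ forces $u\leq 0$; subtracting $\Delta_2$ from $u$ instead leads to a recursion rather than an immediate contradiction. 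Two small points you should make explicit: (i) the existence of the minima requires not just discreteness but non-emptiness of the two defining sets, which uses the irrationality of $\varepsilon$ (density of $\Z+\Z\varepsilon$) together with the unboundedness of the first coordinates of lattice points in the strip; and (ii) the reduction to $\Omega=[0,\ell)$ glosses over the open/closed endpoint conventions, which can affect at most finitely many points but do matter for the literal claim that the gap sequence is exactly a 3iet word.
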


The distances between consecutive points of
$\Sigma_{\varepsilon,\eta}(\Omega)$ take always two or three
values which are given in terms of the continued fractions of
$\varepsilon, \eta$, dependently on the width of the interval
$\Omega$, but not on its position on the real line. The distances
are two for a discrete set of values of the width of $\Omega$, and
three otherwise.

The spectra studied in this paper will be put into connection with
cut-and-project sequences where the parameters $\varepsilon, \eta$
are mutually conjugated quadratic units $\eta=\beta$,
$\varepsilon=\beta'$, and the isomorphism $\star$ of the additive
groups $\Z[\varepsilon]$, $\Z[\eta]$ is the Galois automorphism on
the field $\Q(\beta)$. For simplicity, we denote the corresponding
cut-and-project set $\Sigma_{\beta',\beta}(\Omega) =
\Sigma_{\beta}(\Omega)$. In such a case the dependence of
distances on the width of the interval $\Omega$ is expressed in a
more explicit form, and we will use it in
Section~\ref{sec:frequencies} to provide the values of distances
and the corresponding frequencies in the spectra.

\section{Spectrum is not equal to a cut-and-project set}\label{sec:flaw}

Cut-and-project sets were implicitly used in~\cite{Hare2004} for
the description of spectrum $X^m(\tau)$ where
$\tau=\frac12(1+\sqrt5)$. The proof of the result stands on a formula (given in~\cite{Hare2004} just before Definition~4), saying
that a sufficiently large real number $y$ belongs to the
spectrum $X^m(\tau)$ if and only if $y$ belongs to
$\Sigma_\tau(\Omega)$ with $\Omega$ given in
Proposition~\ref{p:XY}. Such a statement is, however, true only if
$m=1$. The aim of this section is to prove the following
proposition.

\begin{prop}\label{p:nekonecneporuseni}
Let $\beta>1$ be a quadratic unit, $m\in\N$, $m\geq
\lfloor\beta\rfloor$, and let $\Omega$ be as in
Proposition~\ref{p:XY}. Then there exist infinitely many $y>0$
such that $y\in\Sigma_\beta(\Omega)$ and $y\notin X^m(\beta)$,
unless $m=\lfloor\beta\rfloor$ and $\beta^2=p\beta+1$ for $p\geq
1$.
\end{prop}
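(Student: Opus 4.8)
The plan is to exhibit, for each relevant case, an explicit infinite family of elements $y$ of the cut-and-project set $\Sigma_\beta(\Omega)$ that cannot be written as $\sum a_j\beta^j$ with digits $a_j\in\{0,1,\dots,m\}$. The natural strategy is to compare the two descriptions: membership in $\Sigma_\beta(\Omega)$ is a purely geometric condition, namely $y\in\Z[\beta]$ together with $y'\in\Omega$, whereas membership in $X^m(\beta)$ additionally requires a \emph{nonnegative} digit representation in base $\beta$ with bounded digits. Proposition~\ref{p:XY} shows $X^m(\beta)\subset\Sigma_\beta(\Omega)$, and the point here is that this inclusion is generally strict in an essential, non-eventual way. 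So I would look for lattice points landing inside the strip $\Omega$ whose base-$\beta$ expansion is forced to use a digit exceeding $m$ or a negative digit.

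First I would treat the two quadratic cases separately, using $\beta^2=p\beta+1$ (so $\beta'=-1/\beta$) and $\beta^2=p\beta-1$ (so $\beta'=1/\beta$), since the endpoints of $\Omega$ and the sign behaviour of the Galois conjugate differ. The concrete mechanism I expect to use is the following: take a candidate $y=c+d\beta\in\Z[\beta]$ with $y'=c+d\beta'\in\Omega$, and show that any attempt to write $y$ with digits in $\{0,\dots,m\}$ leads to a contradiction. Because $\beta$ is a unit satisfying a quadratic relation, one can convert between representations using the rewriting rules $\beta^2=p\beta\pm1$, which let one trade a high power for lower ones while changing digits. The key observation is that at the \emph{boundary} values of $m$ these rewriting moves can always repair an over-large digit, but once $m$ is strictly larger than $\lfloor\beta\rfloor$ (or in the case $\beta^2=p\beta-1$ even at $m=\lfloor\beta\rfloor$), there remain lattice points in the interior of $\Omega$ for which no valid repair exists. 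I would make this quantitative by scaling: if a single $y_0$ fails to lie in $X^m(\beta)$ while lying in $\Sigma_\beta(\Omega)$, then one multiplies by suitable powers of $\beta$ (and adds controlled digit strings) to produce infinitely many such $y$, using that multiplication by $\beta$ sends $\Z[\beta]$ to itself and contracts the conjugate $y'\mapsto\beta' y'$ so that the image stays inside the bounded interval $\Omega$.

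The cleanest route to the infinitude is probably to fix one explicit witness $y_0\in\Sigma_\beta(\Omega)\setminus X^m(\beta)$ and then observe that $\Sigma_\beta(\Omega)$ is invariant under $y\mapsto\beta y + a$ for appropriate digits $a$ keeping $y'$ in $\Omega$, while the complement of $X^m(\beta)$ is preserved by a similar construction; iterating yields an infinite sequence. Alternatively, one exhibits an arithmetic progression of lattice points whose conjugates are equidistributed in $\Omega$ (since $\beta'$ is irrational, the points $\{n\beta'\bmod 1\}$ are dense), so infinitely many fall in the open subinterval of $\Omega$ corresponding to non-representable numbers. I would exclude precisely the one exceptional case, $m=\lfloor\beta\rfloor$ with $\beta^2=p\beta+1$, by checking that there the greedy/valid digit algorithm succeeds for every lattice point in $\Omega$, i.e.\ the strip is exactly filled; this is the borderline case where the spectrum does coincide with the cut-and-project set (consistent with the $m=1$ golden ratio situation underlying the flawed argument in~\cite{Hare2004}).

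The main obstacle I anticipate is establishing non-representability rigorously rather than just finding a plausible candidate: showing that a given $y\in\Sigma_\beta(\Omega)$ genuinely admits \emph{no} expansion with digits in $\{0,\dots,m\}$ requires controlling all possible representations, not just the greedy one. The delicate point is that $X^m(\beta)$ allows redundant (non-greedy) expansions once $m\geq\lfloor\beta\rfloor$, so I must argue that even exploiting this redundancy one cannot reach $y$. I expect to handle this by a careful analysis of the possible leading digits and an induction on the degree of the polynomial, using the constraint that the conjugate $y'$ lies in $\Omega$ together with the identity $\beta'=\mp1/\beta$ to bound the contributions of the tail; the interplay between the forced sign of $y'$ near the endpoints of $\Omega$ and the admissible digit set is what ultimately rules out a valid representation in all but the excepted case.
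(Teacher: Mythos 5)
Your overall strategy---explicit witnesses treated separately in the two quadratic cases, rewriting rules derived from $\beta^2=p\beta\pm1$, Galois-conjugate estimates, and singling out $m=\lfloor\beta\rfloor$ with $\beta^2=p\beta+1$ as the genuine exception---is the same as the paper's (Lemmas~\ref{l:Katka1} and~\ref{l:Katka2} combined with Corollaries~\ref{c:1} and~\ref{c:2}). However, both mechanisms you propose for passing from one witness to infinitely many contain a gap. The set of numbers \emph{not} in $X^m(\beta)$ is not preserved by $y\mapsto\beta y+a$: if $\beta y_0+a=\sum_{j\geq0}a_j\beta^j$ with $a_0\neq a$, this representation does not descend to a representation of $y_0$, so $y_0\notin X^m(\beta)$ does not force $\beta y_0+a\notin X^m(\beta)$. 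The implication that actually works is the reverse descent---if the candidate $y_k=\beta y_{k-1}+m$ lies in $X^m(\beta)$ \emph{and} some admissible representation of it ends in the digit $m$, then $y_{k-1}\in X^m(\beta)$---and making that run requires showing that \emph{every} element of the spectrum admits a representation in a controlled normal form whose possible endings can be excluded one by one using the bound on $y_k'$. That normal form (obtained by digit-sum or lexicographic reduction via the rewriting rules) is exactly the content of Lemmas~\ref{l:Katka1}--\ref{l:Katka2}, and the exclusion of the alternative endings is the case analysis in Corollaries~\ref{c:1}--\ref{c:2}. You correctly flag ``controlling all possible representations'' as the obstacle, but the scheme you sketch (analysis of leading digits, induction on the degree) does not yet supply it; the paper's argument hinges on the \emph{trailing} digits and a minimal-counterexample induction on $k$.

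Your equidistribution alternative cannot work even in principle: there is no open subinterval of $\Omega$ consisting of conjugates of non-representable points. Proposition~\ref{p:classical} shows that for every $\delta>0$ all but finitely many elements of $\Sigma_\beta\big((1-\delta)\Omega\big)$ already belong to $X^m(\beta)$, so the exceptional points have conjugates accumulating only at the endpoints of $\Omega$ (the paper's witnesses indeed satisfy $y_k'\to\sup\Omega$). Density of $\{n\beta'\bmod 1\}$ in an open set therefore proves nothing here; the witnesses must be constructed so that $y'$ approaches the boundary of the window, which is precisely what the explicit families $\overline{1}m^k{\scriptstyle\bullet}$ and $\overline{1}(0m)^k{\scriptstyle\bullet}$ achieve.
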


The statement of the proposition follows from Corollaries~\ref{c:1} and~\ref{c:2}, which we provide below.
Let us first present a suitable notation for
elements of the spectrum $X^m(\beta)$, i.e.\ numbers which written
in base $\beta$ use only non-negative powers. For simplifying the
notation of $y=\sum_{k=0}^{n-1}y_k\beta^k\in X^m(\beta)$ we write
symbolically
$$
y=y_{n-1}y_{n-2}\cdots y_1y_0{\scriptstyle\bullet}
$$
and speak about a representation of $y$. In all this section, the
base $\beta$ and the parameter $m$ are fixed, therefore we need
not include them in the notation of the representation. Let us
mention that a more detailed explanations to number
representations will be given (and needed) in the following
section.

A number $y\in X^m(\beta)$ can have many representations. For a
quadratic unit base $\beta>1$, we will use representations of a
special form, as given in Lemmas~\ref{l:Katka1}
and~\ref{l:Katka2}.

The string of digits $y_{n-1}y_{n-2}\cdots y_1y_0$ can be regarded
as a finite word of length $n$ over the alphabet $\A$. The set of
finite words over $\A$ is denoted by $\A^*$. Together with the
operation of concatenation (where the neutral element is the empty
word $\epsilon$), it is a monoid. We denote by $w^i$ the
concatenation of $i$ copies of a finite word $w$, and by
$w^\omega$ an infinite repetition of $w$.
 The monoid $\A^*$ is linearly
ordered by the usual lexicographical order $\prec_{\text{\tiny
lex}}$. It is natural to extend the lexicographic order to
infinite words over $\A$ which form a set denoted by $\A^{\N}$.

We distinguish two cases of bases. First we consider
$\beta^2=p\beta-1$, $p\geq 3$.

\begin{lem}\label{l:Katka1}
 Let $\beta>1$ be a root of $x^2-px+1$, $p\geq 3$, and let
 $m\in\N$, $m\geq \lfloor\beta\rfloor=p-1$. Then every $y\in
 X^m(\beta)$ has in base $\beta$ a
 representation $y=ucm^j{\scriptstyle\bullet}$, where
\begin{enumerate}
 \item $c\in\{0,1,\dots,m-1\}$\,,
 \item $u\in \{0,1,\dots,p-1\}^*$\,,
 \item any suffix of $u$ is lexicographically smaller than the
 infinite eventually periodic word $(p-1)(p-2)^\omega$.
\end{enumerate}
 \end{lem}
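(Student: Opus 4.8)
The inclusion ``$\supseteq$'' is trivial, since any word of the form $ucm^j$ already has all of its digits in $\{0,\dots,m\}$; so the whole content is to produce, for a given $y\in X^m(\beta)$, \emph{some} representation of the prescribed shape. The only algebraic input is the defining relation, which I would use in the form $\beta^{k+1}-p\beta^k+\beta^{k-1}=0$ for $k\ge1$. This yields the value-preserving digit rewriting
$$
(y_{k+1},y_k,y_{k-1})\ \longmapsto\ (y_{k+1}+1,\,y_k-p,\,y_{k-1}+1),\qquad k\ge1,
$$
applicable whenever $y_k\ge p$ (call it an \emph{up-reduction}), together with its inverse. I would immediately record the monovariant that singles out $p\ge3$: an up-reduction changes the digit sum $\sum_k y_k$ by $2-p<0$, so any run of up-reductions keeping all digits non-negative must terminate.

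The plan is to build $y$ by the Horner closure $x\mapsto\beta x+a$, $a\in\{0,\dots,m\}$ (equivalently, to induct on the length of an initial $\{0,\dots,m\}$-representation), maintaining the invariant that the current value is written in the form $ucm^j{\scriptstyle\bullet}$. Applying $\beta(\cdot)$ shifts the word up by one position and appending $a$ resets the bottom digit; when $a=m$ the trailing block simply lengthens to $m^{j+1}$, and when $a<m$ the former tail digits $m$ become \emph{internal} and, being $\ge p$ whenever $m\ge p$, are exactly the digits that up-reductions clear, with the digit sum decreasing. Throughout, the transition digit $c$, which condition~(1) allows to be anything up to $m-1$ (in particular $\ge p$), together with the protected block $m^j$, shields position $0$, where the relation is unavailable because it would require the forbidden power $\beta^{-1}$. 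For condition~(3) I would use that, for a root of $x^2-px+1$, the Rényi expansion of $1$ is $d_\beta(1)=(p-1)(p-2)^\omega$, so that (3) is precisely the Parry admissibility condition for the high part $u$.

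The hard part will be enforcing admissibility of $u$, because it cannot be done by local, non-negativity-preserving rewrites alone: a factor $(p-1)(p-1)$ inside $u$ is inadmissible yet is a fixed point of the up-reduction/inverse pair, so repairing it forces a non-local carry that travels upward into a \emph{fresh} most significant position, as in the model identity $(p-1)(p-1)(p-1){\scriptstyle\bullet}=1\,0\,0\,p{\scriptstyle\bullet}$ (valid in particular when $m=p$), a move that transiently raises the digit sum. Consequently termination no longer follows from $\sum_k y_k$, and a dedicated well-founded monovariant is needed; here I would lean on $\beta$ being a Pisot unit, whose finiteness property guarantees the existence of a terminating normal form with non-negative powers only, with the bounded Galois window $\Omega$ of Proposition~\ref{p:XY} confining where the digits may live.

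Thus the technical heart, and the step I expect to be most delicate, is showing that the down-carries generated while repairing admissibility always \emph{stabilise} into the shape $u\,c\,m^j$: a unit landing on a tail digit $m$ makes it $m+1\ge p$ and triggers a cascade that must be absorbed by lengthening the $m$-run and adjusting $c$. This is exactly where the hypothesis $m\ge p-1=\lfloor\beta\rfloor$ is essential, since it leaves room to soak up such carries into maximal digits $m$ while keeping the single digit $c$ below $m$ and the whole of $u$ admissible.
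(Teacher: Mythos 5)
You correctly isolate the elementary rewriting $(y_{k+1},y_k,y_{k-1})\mapsto(y_{k+1}+1,\,y_k-p,\,y_{k-1}+1)$ and the digit-sum monovariant, but you abandon that monovariant exactly where the proof needs it, and your replacement does not work. The paper's proof adds a second, \emph{telescoped} rule obtained from $\beta^2+1=p\beta$: for every $k\geq 0$,
$$
0\,(p-1)\,(p-2)^k\,(p-1)\,0\,{\scriptstyle\bullet}\ \longrightarrow\ 1\,0^{k+2}\,1\,{\scriptstyle\bullet}\,,
$$
which repairs in a single value-preserving step precisely the factors violating condition (3) (and, applied to a run $x\,m^k\,y$ with $x,y<m$, clears interior digits $m$). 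Its net effect on the digit sum is $2-2(p-1)-k(p-2)=-(p-2)(k+2)<0$ for $p\geq 3$; even your own model identity $(p-1)(p-1)(p-1){\scriptstyle\bullet}=1\,0\,0\,p\,{\scriptstyle\bullet}$ lowers the digit sum from $3p-3$ to $p+1$. Only the \emph{intermediate} stages of a naive realization by elementary carries raise the sum or go negative; once the telescoped identity is adopted as a primitive rule, the same integer-valued, strictly decreasing, nonnegative digit sum proves termination of the entire rewriting, with no induction on Horner steps and no transfinite bookkeeping. One then checks directly that a terminal word must have the form $ucm^j$: an interior digit $\geq p$ flanked by digits $<m$ triggers the first rule, an interior maximal run of $m$'s triggers one of the two rules, and a suffix of $u$ that is not lexicographically smaller than $(p-1)(p-2)^\omega$ contains a factor $(p-1)(p-2)^k(p-1)$ to which the telescoped rule applies.

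Your fallback --- ``lean on $\beta$ being a Pisot unit, whose finiteness property guarantees the existence of a terminating normal form'' --- is a genuine gap rather than an alternative route. Property (F) concerns greedy $\beta$-expansions of elements of $\Z[\beta]\cap[0,\infty)$; it neither produces representations over the alphabet $\{0,\dots,m\}$ with the prescribed tail $cm^j$, nor supplies a well-founded measure for the carry cascade you describe, and the step you yourself flag as ``the technical heart'' (stabilisation of the down-carries into the shape $u\,c\,m^j$) is exactly the content of the lemma and is left unproved. No Pisot-theoretic input is needed: the hypotheses enter only through $(p-2)(k+2)>0$ (i.e.\ $p\geq 3$) and through the fact that, for $m\geq p-1$, every maximal interior run of the digit $m$ is flanked by digits $<m$, so that one of the two rules is always applicable to a non-terminal word.
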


\begin{proof}
As $\beta^2+1=p\beta$, we have
\begin{itemize}
 \item[(i)] $z=0p0\bul\ \Rightarrow\ z=101\bul$\,,
 \item[(ii)] $z=0(p-1)(p-2)^k(p-1)0\bul \ \Rightarrow\
       z=10^{k+2}1\bul$ \ for all $k\in\N$.
\end{itemize}
Note that the representation of $z$ on the right in the
implications has strictly smaller sum of digits than the
representation of $z$ on the left. Let us demonstrate that by using
the rules (i) and (ii) one can reduce the sum of digits in a
general representation of a number $y=\sum_{j=0}^ny_j\beta^j\in
X^m(\beta)$, where $y_j\in\{0,1,\dots,m\}$.

If there is an $i\in\N$ such that $y_i,y_{i+2}<m$ and $y_{i+1}\geq
p$, then by (i), the number $y_{i+2}y_{i+1}y_i\bul$ has in the
alphabet $\{0,1,\dots,m\}$ also the representation
$(y_{i+2}+1)(y_{i+1}-p)(y_i+1)\bul$, and thus also $y$ has another
representation over $\{0,1,\dots,m\}$ with strictly smaller digit
sum. Similarly, if there exist $i,k\in\N$ such that
$y_i,y_{i+k+3}<m$, $y_{i+1},y_{i+k+2}\geq p-1$ and $y_{i+j}\geq
p-2$ for $j=2,3,\dots,k+1$, then by (ii),
$$
(y_{i+k+3}+1)(y_{i+k+2}-p+1)(y_{i+k+1}-p+2)\cdots
(y_{i+2}-p+2)(y_{i+1}-p+1)(y_i+1)\bul
$$
is a representation of $y_{i+k+3}\cdots y_{i}\bul$ with strictly
smaller digit sum, and therefore $y$ has another representation
with strictly smaller digit sum.

Repeated application of rules (i) and (ii) yields a
representation of $y\in X^m(\beta)$ in the form
$y=\sum_{j=0}^tr_j\beta^j$, where no other application of rules
(i) or (ii) is possible. We will show that the latter
representation satisfies the properties given in the lemma.

First let us show that if $m>p-1$, then the digits $m$ can appear
only in the suffix $m^j$ of $r_t\cdots r_0\bul$. Indeed, if there
is a pair of consecutive digits $m\,y$, where $y<m$, then it
occurs in a string $xm^ky$, where also $x<m$. But then, one can
use the rule (i) if $k=1$ or the rule (ii) if $k\geq 2$.

For $m\geq p-1$, denote by $j$ the minimal index such that
$r_j<m$. If $j>t$, then we set $c=0=u$ and the proof is finished.
If $j\leq t$ we set $c=r_j$ and $u=r_t\cdots r_{j+1}$. In order to
complete the proof, we need to verify that every suffix of $u$ is
lexicographically smaller than $(p-1)(p-2)^\omega$. This is indeed
true, since otherwise one can apply the rule (ii).
\end{proof}

\begin{pozn}\label{l:Katka1a}
Note that numbers $y$ with representation $y=u\bul$, where $u$
satisfies the properties 2.\ and 3.\ in Lemma~\ref{l:Katka1}, are
precisely all the non-negative $\beta$-integers in the R\'enyi
numeration system; their set is denoted by $\Z_\beta^+$,
see~\cite{BuFrGaKr1998}. It is shown there that
$$
X^{\lfloor\beta\rfloor}(\beta)\supset
\Z_\beta^+=\Sigma_\beta(\Omega)\cap [0,+\infty)\,,\quad
\text{where}\quad \Omega=[0,\beta)\,.
$$
This implies that for $y=u\bul$, one has $y'< \beta$.
\end{pozn}

The following corollary shows that the inclusion in Proposition~\ref{p:XY} cannot be replaced by equality
if only finitely many points are inserted to the spectrum.

\begin{coro}\label{c:1}
Let $\beta>1$ be a root of $x^2-px+1$, $p\geq 3$, and let
 $m\in\N$, $m\geq \lfloor\beta\rfloor$. For $k\geq 1$, define
 $y_k=\overline1m^k\bul$, where $\overline{1}$ stands for $-1$. Then
 $y_k\in\Sigma_\beta(\Omega)$, where $\Omega=\big[0,\frac{m\beta}{\beta-1}\big)$, but $y_k\notin
 X^m(\beta)$.
\end{coro}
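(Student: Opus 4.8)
The plan is to prove the two assertions separately: the (routine) membership $y_k\in\Sigma_\beta(\Omega)$, and the (harder) non-membership $y_k\notin X^m(\beta)$.

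For the membership, note that $y_k=-\beta^k+m\sum_{i=0}^{k-1}\beta^i$ lies in $\Z[\beta]$, so only its Galois image must be located. Since $\beta'=1/\beta$, applying $'$ and summing the geometric series gives $y_k'=-\beta^{-k}+m\sum_{i=0}^{k-1}\beta^{-i}$. Both correction terms are subtracted from $\frac{m\beta}{\beta-1}=m\sum_{i=0}^{\infty}\beta^{-i}$, whence $y_k'<\frac{m\beta}{\beta-1}$; on the other hand $y_k'>m\cdot1-\beta^{-k}>m-1\ge\lfloor\beta\rfloor-1\ge1>0$. Thus $y_k'\in\big[0,\frac{m\beta}{\beta-1}\big)=\Omega$, so $y_k\in\Sigma_\beta(\Omega)$.

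For the non-membership I argue by contradiction using the canonical representations of Lemma~\ref{l:Katka1}. Suppose $y_k\in X^m(\beta)$; then $y_k=ucm^j\bul$ with $c\le m-1$ and $u$ coding a nonnegative $\beta$-integer $U:=u\bul$, so that $y_k=U\beta^{j+1}+c\beta^j+m\frac{\beta^j-1}{\beta-1}$. The first step is to pin down the length $j$ of the tail of $m$'s. A short computation shows $\frac{m\beta}{\beta-1}-y_k'=\beta^{-j}\big[\frac{m\beta}{\beta-1}-c-\frac{U'}{\beta}\big]$, and the bounds $0\le c\le m-1$, $0\le U'<\beta$ (Remark~\ref{l:Katka1a}) squeeze this quantity between $\beta^{-j}\frac{m}{\beta-1}$ and $\beta^{-j}\frac{m\beta}{\beta-1}$. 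Comparing with the exact value $\frac{m\beta}{\beta-1}-y_k'=\beta^{-k}\frac{(m+1)\beta-1}{\beta-1}$ confines $\beta^{\,j-k}$ to an interval of multiplicative length $\beta$ containing the single power $\beta^{-1}$, which forces $j=k-1$.

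Once $j=k-1$, equating the two expressions for $y_k$ and dividing by $\beta^{k-1}$ collapses the identity to $(U+1)\beta+c=m$, i.e.\ $(U+1)\beta=m-c$. Taking the Galois image gives $U'+1=(m-c)\beta$, and the constraints $0\le U'<\beta$ together with $c\le m-1$ force $m-c=1$; then $U+1=1/\beta$, so $U=\frac1\beta-1<0$, contradicting that $U$ is the value of a nonnegative $\beta$-integer. Hence $y_k\notin X^m(\beta)$. I expect the middle step — controlling the position $j$ of the small digit $c$ — to be the main obstacle, since the canonical form alone leaves $j$ free; it is precisely the sharp estimate $U'<\beta$ of Remark~\ref{l:Katka1a} that closes this gap and reduces the conclusion to the final one-line computation.
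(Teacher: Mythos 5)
Your proof is correct, and while it rests on the same two ingredients as the paper's --- the canonical form $ucm^j\bul$ of Lemma~\ref{l:Katka1} and the bounds $0\le u\bul$, $0\le (u\bul)'<\beta$ of Remark~\ref{l:Katka1a} --- it organizes them genuinely differently. The paper argues by minimal counterexample in $k$: using one-sided Galois estimates it shows that the canonical representation of $y_k$ must end in the digit $m$, divides by $\beta$ to conclude $y_{k-1}\in X^m(\beta)$, and treats $k=1$ as a separate base case. You instead fix an arbitrary $k$ and extract more from the Galois image: the two-sided squeeze $\beta^{-j}\tfrac{m}{\beta-1}<\tfrac{m\beta}{\beta-1}-y_k'\le\beta^{-j}\tfrac{m\beta}{\beta-1}$, compared with the exact value $\tfrac{m\beta}{\beta-1}-y_k'=\beta^{-k}\tfrac{(m+1)\beta-1}{\beta-1}$ and the easily checked inclusion $\tfrac{(m+1)\beta-1}{m}=\beta+\tfrac{\beta-1}{m}\in(\beta,\beta^2)$, pins down $j=k-1$ outright; the resulting identity $(U+1)\beta+c=m$ is then impossible, since $m-c$ is a positive integer, $U'+1=(m-c)\beta<\beta+1$ forces $m-c=1$, and then $U=\tfrac1\beta-1<0$ contradicts $U\ge0$. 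I checked the intermediate formulas and the ``multiplicative length $\beta$'' argument locating $\beta^{j-k}=\beta^{-1}$; they are all sound. What your route buys is the elimination of the induction and of the special case $k=1$ (subsumed as $j=0$); what it costs is the sharper lower bound on the bracket, which needs $c\le m-1$ and $U'<\beta$ simultaneously --- the same two constraints the paper invokes, but one at a time.
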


\begin{proof}
We have $y_k=\overline1m^k \bul\in \Sigma_\beta(\Omega)$ since
 $$
 0 < m-\frac{1}{\beta}=y'_1\leq y'_k < \sum_{j=0}^\infty m (\beta')^j=\sum_{j=0}^\infty \frac{m}{
 \beta^j}=\frac{m\beta}{\beta-1}\,,
 $$
where we have used the fact that the sequence $(y_n')_{n\geq 1}$ is
increasing.

We now show that the assumption $y_k=\overline1m^k\bullet \in
X^m(\beta)$ leads to contradiction. For $k=1$, we have $y_1=m-\beta$ and
\begin{equation}\label{eq:nerov1}
y_1'=-\beta' +m = -\frac{1}{\beta} + m > m-1\,.
\end{equation}
If $y_1\in X^m(\beta)$, then by Lemma~\ref{l:Katka1} one can
represent $y_1$ by $y_1=ucm^j\bul$, with the required properties.
Since the digits of the new representation of $ucm^j$ are
non-negative, its last digit must be less or equal to $\lfloor y_1\rfloor=m-p$.   Thus necessarily $j=0$ and $c\leq m-p$, i.e.
$y_1=uc\bul$. Since by Remark~\ref{l:Katka1a}, we have $(u\bul)'<\beta$,
we can write
$$
y_1'=\big(\beta (u\bul) + c\big)' = \frac{(u\bul)'}{\beta} + c <
\frac{\beta}{\beta}+ m-p\leq m-2\,,
$$
This contradicts~\eqref{eq:nerov1}.

Suppose that for some $k\geq 2$, we have $y_k \in X^m(\beta)$. Let
$k$ be minimal with this property. We claim that the last digit of
the representation of $y_k$ from Lemma~\ref{l:Katka1} is equal to
$m$. Otherwise, the representation is of the form $y_k=uc\bul$,
where $(u\bul)'<\beta$ and $c\leq m-1$, i.e.
\begin{equation}\label{eq:nerov2}
y_k'=(\beta(u\bul)  + c)' = \frac{(u\bul)'}{\beta} + c \leq
\frac{\beta}{\beta}+m-1\leq m\,.
\end{equation}
On the other hand, we have $y_k=\overline{1}m^k$
$$
y'_k\geq y'_2 = (\overline{1}mm\bul)' =
-(\beta^2)'+m\beta'+m=\frac{-1+m\beta}{\beta^2}+m> m\,,
$$
which contradicts~\eqref{eq:nerov2}. Thus indeed, the
representation of $y_k$ from Lemma~\ref{l:Katka1} ends in $m$,
i.e.\ $y_k=ucm^j\bul$ with $j\geq 1$. Then we can consider
$y_{k-1}=\frac{y_k-m}{\beta}$ with a representation in the form
$ucm^{j-1}\bul$, which proves that $y_{k-1}\in X^m(\beta)$. This
contradicts the choice of $k$ as the minimal index such that
$y_{k}\in X^m(\beta)$.
\end{proof}

Now, we will consider the second class of quadratic unit bases,
namely such that $\beta^2=p\beta+1$, $p\geq 1$.

\begin{lem}\label{l:Katka2}
Let $\beta>1$ be a root of $x^2-px-1$, $p\geq 1$, and let
 $m\in\N$, $m\geq  \lfloor\beta\rfloor=p$. Then every $y\in
 X^m(\beta)$ has in base $\beta$ a
 representation $y=uwv\bul$, where
\begin{enumerate}
 \item $v$ is a (possibly empty) prefix of the infinite purely
 periodic word $(m0)^\omega$\,,
 \item $w=\epsilon$ or $w=cd$ where $c,d\in\{0,1,\dots,m-1\}$, and if $d\geq 1$, then $c\leq p-1$.
 \item $u\in \{0,1,\dots,p\}^*$\,.
\end{enumerate}
 \end{lem}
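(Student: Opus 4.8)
The plan is to mirror the structure of the proof of Lemma~\ref{l:Katka1}: identify a small set of digit-rewriting rules that follow from the defining relation $\beta^2=p\beta+1$, show that they can only decrease the digit sum, apply them repeatedly until no rule applies, and then argue that a representation to which no rule applies must have exactly the claimed shape $uwv\bul$. Since the digit sum is a non-negative integer and each rule strictly decreases it, the reduction process terminates, yielding a normal form.

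First I would record the rewriting rules coming from $\beta^2=p\beta+1$. The key identity is $\beta^2=p\beta+1$, equivalently $0p(-1)\bul \to 10\,0\bul$ read the other way, so the digit-reducing moves will be of the shape: a block carrying a digit $\geq p+1$ (or a digit $p$ sitting above a sufficiently large lower digit) can be rewritten by adding $1$ to the digit two places higher and subtracting $p$ from the middle digit while decreasing the lowest digit. Concretely I expect a rule analogous to (i), namely $z=0p1\bul \Rightarrow z=10(-?)\bul$ adjusted to keep digits in $\{0,\dots,m\}$, together with a ``string'' rule handling how the forbidden pattern propagates, playing the role of (ii) in Lemma~\ref{l:Katka1}. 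The asymmetry from $\beta^2=p\beta+1$ (the $+1$ rather than $-1$) is what produces the suffix built from the periodic word $(m0)^\omega$ instead of a suffix $m^j$; the pattern $m0\,m0\cdots$ is exactly the string fixed by the relevant carry rule, so it is the one pattern the reduction cannot simplify.

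Next I would verify that each rule strictly lowers the digit sum (as in the parenthetical remark after rules (i)--(ii) in Lemma~\ref{l:Katka1}), so that repeated application terminates in a representation $r_t\cdots r_0\bul$ admitting no further rule. The bulk of the argument is then to read off the three structural claims from the non-applicability of the rules: that no digit exceeds $m$ anywhere and that the high-order part $u$ uses only digits in $\{0,1,\dots,p\}$ (since a digit $p+1$ or more would trigger the carry rule); that the terminal portion, once it drops below the ``full'' value, must alternate as a prefix $v$ of $(m0)^\omega$; and that the single transitional block $w$ between $u$ and $v$ is either empty or a length-two word $cd$ with $c,d\le m-1$ and the extra constraint $c\le p-1$ when $d\ge1$. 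Each of these is established by showing that violating it exposes a pattern on which rule (i) or the string rule still applies, contradicting minimality.

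The main obstacle I anticipate is the bookkeeping around the transitional block $w$ and the precise boundary between $u$, $w$, and $v$. Unlike the $\beta^2=p\beta-1$ case, where the suffix is simply $m^j$ and the cutoff is the minimal index with $r_j<m$, here the admissible suffix $(m0)^\omega$ interleaves large and zero digits, so deciding where the ``digits bounded by $p$'' region $u$ ends and the alternating tail begins requires care; the middle word $w=cd$ is exactly the irregular junction that neither fits the $\{0,\dots,p\}^*$ constraint nor the periodic pattern, and pinning down the inequality $c\le p-1$ when $d\ge 1$ will be the delicate step. I would handle this by a careful case analysis on the two rightmost digits at the junction, checking in each case that any configuration other than the permitted ones $w=\epsilon$ or $w=cd$ (with the stated bounds) still admits a reduction.
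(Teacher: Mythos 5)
Your overall strategy --- derive rewriting rules from $\beta^2=p\beta+1$, reduce a representation to a normal form by repeated application, and read off the structure $uwv\bul$ from the non-applicability of the rules --- is exactly the route the paper takes, and your anticipated case analysis at the junction between $u$, $w$ and $v$ matches what the paper actually does. However, your termination argument has a genuine gap. You propose to verify that \emph{each} rule strictly lowers the digit sum, citing the analogy with Lemma~\ref{l:Katka1}. For the bases $\beta^2=p\beta+1$ this cannot work for the rule needed to eliminate digits exceeding $p$. The paper's rules are (i) $0p1\bul\Rightarrow 100\bul$, which does decrease the digit sum (by $p$), and (ii) $0(p+1)00\bul\Rightarrow 10(p-1)1\bul$, which leaves the digit sum unchanged ($p+1$ on both sides). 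This is unavoidable: to clear a digit $p+1$ sitting above zeros one must perform the carry $p\beta^{k+1}+\beta^{k}\to\beta^{k+2}$, which creates a digit $-1$ at position $k$ that must then be repaired by a compensating borrow one level down; the two moves change the digit sum by $-p$ and $+p$ respectively. The sign difference in the minimal polynomial (the $+1$ versus the $-1$ of Lemma~\ref{l:Katka1}) is precisely what breaks the digit-sum monovariant, so ``the digit sum is a non-negative integer and each rule strictly decreases it'' is false here and your reduction process is not yet known to terminate.

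The missing idea is a second component of the termination measure: the paper observes that rule (ii), while preserving the digit sum, produces a representation that is strictly lexicographically greater, and since representations of a fixed $y$ with nonnegative digits bounded by $m$ have bounded length (the leading nonzero position is at most $\log_\beta y$), there are only finitely many of them for each value of the digit sum. The pair (digit sum, lexicographic order) then yields termination. The remainder of your outline --- reading off conditions 1--3 by showing that any violation exposes an applicable instance of (i) or (ii), with the delicate bound $c\le p-1$ when $d\ge 1$ coming from rule (ii) --- is sound and agrees with the paper; note also that the paper dispatches the boundary case $m=\lfloor\beta\rfloor=p$ separately, where every element of the spectrum is trivially of the form $u\bul$.
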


\begin{proof}
If $m=\lfloor\beta\rfloor=p$, then obviously, every element of the spectrum is in the form $y=u\bul$. Assume therefore that $m>\lfloor\beta\rfloor$.
The demonstration will use methods analogous to those of the proof
of Lemma~\ref{l:Katka1}. From $\beta^2=p\beta+1$, we can derive
the rewriting rules
\begin{itemize}
 \item[(i)] $z=0p1\bul\ \Rightarrow\ z=100\bul$\,,
 \item[(ii)] $z=0(p+1)00\bul \ \Rightarrow\
       z=10(p-1)1\bul$\,.
\end{itemize}
Note that the representation of $z$ on the right in the
implication in (i) has strictly smaller sum of digits than the
representation of $z$ on the left. In the rule (ii), both
representations have the same sum of digits, but the
representation on the right is strictly lexicographically greater
than that on the left.

Consider $y\in X^m(\beta)$. Repeated application of rules (i) and
(ii) yields a representation of $y$ in the form
$y=\sum_{j=0}^tr_j\beta^j$, where no other application of rules
(i) or (ii) is possible. We will show that the latter
representation satisfies the properties given in the lemma.

The final representation does not contain a substring of digits
$c\,d$ with $c\geq p$, $d\geq 1$. Otherwise, consider the most
left occurrence of such $c\,d$ in the string $r_tr_{t-1}\cdots
r_0$, i.e. one has a factor $xcd$, where $x<m$. Then one can use
the rule (i).

The representation $r_tr_{t-1}\cdots r_0\bul$ does not contain a
substring $m0d$ with $d<m$. By what has just been said, such a
substring occurs as a suffix of $xm0d$, where $x<m$. Then one can
use the rewriting rule (ii). (Note that at this point, we use the inequality
$m\geq p+1=\lfloor\beta\rfloor$+1.)

The only occurrence of the digit $m$ in the representation
$r_tr_{t-1}\cdots r_0\bul$ is in a suffix $v$, which is of the form
$(m0)^j$ or $(m0)^jm$, $j\geq 0$. Let $i$ be such that $r_t\cdots
r_0=r_t\cdots r_iv$, and the word $r_t\cdots r_i$ has only digits
in $\{0,1,\dots, m-1\}$. Moreover, if for some $k\geq i$, we have
$r_k>p$, then $k=i$ or $k=i+1$. Otherwise, we can use the
rewriting rule (ii).
\end{proof}

\begin{pozn}\label{pozn}
Note that numbers $y$ with representation $y=u\bul$ from
Lemma~\ref{l:Katka2} are in fact elements of the spectrum
$X^{\lfloor\beta\rfloor}(\beta)$. Since $\beta$ satisfying
$\beta^2=p\beta+1$, $p\geq 1$, is among the so-called confluent
Pisot numbers, we can use the result of~\cite{Frougny1992} that
any such $y$ is in fact a $\beta$-integer (forming the set
$\Z_\beta^+$). From~\cite{BuFrGaKr1998} we than have
$$
X^{\lfloor\beta\rfloor}(\beta)=\Z_\beta^+=\Sigma_\beta(\Omega)\cap[0,+\infty),\quad\text{where}
\quad\Omega=(-1,\beta)\,.
$$
This implies that if $y=u\bul$, then $-1<y'<\beta$.
\end{pozn}

Note that for the roots of $x^2-px-1$ we have equality between the $\beta$-integers and the spectrum $X^m(\beta)$ with  $m=\lfloor\beta\rfloor$, unlike the other class of quadratic Pisot units, see Corollary~\ref{c:1}. The following corollary therefore does not allow $m=\lfloor\beta\rfloor$.

\begin{coro}\label{c:2}
Let $\beta>1$ be a root of $x^2-px-1$, $p\geq 1$, and let
 $m\in\N$, $m> \lfloor\beta\rfloor$. For $k\in\N$ define
 $y_k=\overline1(0m)^k\bul$. Then
 $y_k\in\Sigma_\beta(\Omega)$ where $\Omega=\Big(-\frac{m\beta}{\beta^2-1},\frac{m\beta^2}{\beta^2-1}\Big)$, but $y_k\notin
 X^m(\beta)$.
\end{coro}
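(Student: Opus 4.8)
The plan is to follow the scheme of Corollary~\ref{c:1}, only replacing the single-step peeling used there by the two-step recurrence
$$
y_k=\beta^2y_{k-1}+m,\qquad y_0=\overline1\bul=-1,
$$
which is immediate from $\beta^2=p\beta+1$ and $y_k=\overline1(0m)^k\bul=-\beta^{2k}+m\sum_{i=0}^{k-1}\beta^{2i}$. Throughout I use $\beta^2-1=p\beta$, so that the right endpoint of $\Omega$ is $\frac{m\beta^2}{\beta^2-1}$ and the left one is $-\frac{m\beta}{\beta^2-1}=-\frac mp$, and $\beta'=-1/\beta$.

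The membership $y_k\in\Sigma_\beta(\Omega)$ is the routine half. Passing to the conjugate gives $y_k'=-\beta^{-2k}+m\sum_{i=0}^{k-1}\beta^{-2i}$, a truncation of the geometric series summing to $\frac{m\beta^2}{\beta^2-1}$; the discarded tail is positive, so $y_k'<\frac{m\beta^2}{\beta^2-1}$, while $-\beta^{-2k}>-1$ yields $y_k'>m-1>0>-\frac mp$ (recall $m>p=\lfloor\beta\rfloor$). Hence $y_k'\in\Omega$.

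For $y_k\notin X^m(\beta)$ I would argue by minimal counterexample, the basic tool being the lattice positivity $X^m(\beta)\subseteq\{a+b\beta:\ a,b\in\Z,\ a,b\ge0\}$: writing $\beta^j=G_j+F_j\beta$ one checks $G_j,F_j\ge0$ for every $j\ge0$, so every element of $X^m(\beta)$ has non-negative coordinates in the basis $\{1,\beta\}$. Now $y_1=(m-1)-p\beta$ has $\beta$-coordinate $-p<0$, hence $y_1\notin X^m(\beta)$, and likewise $y_0=-1\notin X^m(\beta)$; thus the least $k$ with $y_k\in X^m(\beta)$ satisfies $k\ge2$, and then $m\beta^{-2(k-1)}>\beta^{-2k}$ gives $y_k'>m$. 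Writing $y_k=uwv\bul$ as in Lemma~\ref{l:Katka2}, I first force the last digit to equal $m$. If it were $0$, then $y_k/\beta\in X^m(\beta)$ would have conjugate $-\beta y_k'<-\beta m<-\frac mp$, outside $\Omega$, a contradiction. If it were one of $1,\dots,m-1$, the suffix $v$ would be empty (the last letter of a nonempty $v$ is $0$ or $m$), so all digits would be bounded (those of $u$ by $p$, those of $w$ by $m-1$), and Remark~\ref{pozn} ($(u\bul)'<\beta$) would give $y_k'<m$, again impossible. Hence $r_0=m$ and $v=(m0)^jm$.

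The main obstacle is the final step. If $j\ge1$ then $r_1=0$ as well, so $y_{k-1}=(y_k-m)/\beta^2\in X^m(\beta)$ contradicts minimality and we are done; it remains to exclude $j=0$, i.e.\ a single isolated $m$. Here I would peel one digit only, obtaining $z:=(y_k-m)/\beta=\beta y_{k-1}\in X^m(\beta)$, whose conjugate $z'=-y_{k-1}'/\beta$ sits just above $-\frac mp$. Denoting by $s$ the last digit of $z$ and setting $P:=(z-s)/\beta\in X^m(\beta)\cup\{0\}$, a direct computation gives $P'=\beta s+y_{k-1}'$; since $P\in X^m(\beta)$ forces $P'<\frac{m\beta^2}{\beta^2-1}$, the choice $s\ge1$ would imply $y_{k-1}'<\frac{m\beta^2}{\beta^2-1}-\beta$. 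Thus whenever $y_{k-1}'$ lies within $\beta$ of the right endpoint of $\Omega$ we obtain $s=0$, hence $y_{k-1}=z/\beta\in X^m(\beta)$ and the desired contradiction; this covers all but finitely many indices. For the remaining low indices I expect to fall back on the lattice positivity, sharpened to an explicit upper bound on the $\beta$-coordinate of an element of $X^m(\beta)$ in terms of its rational coordinate (the extremal digits being placed at the positions maximizing the ratio $F_j/G_j$), and to check that the coordinates of $z=\beta y_{k-1}$ violate this bound. Making this last, essentially finite, verification uniform in $m$ is the delicate point of the argument.
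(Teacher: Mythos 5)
Your overall strategy coincides with the paper's: induction on a minimal $k$ with $y_k\in X^m(\beta)$, reduction to the canonical representation $uwv\bul$ of Lemma~\ref{l:Katka2}, forcing the suffix $0m$ and peeling it off to contradict minimality. Several of your sub-arguments are pleasant variants of the paper's (killing $y_1$ by the sign of its $\beta$-coordinate instead of a conjugate estimate; excluding a trailing $0$ by pushing the conjugate of $y_k/\beta$ out of $\Omega$ rather than computing $(ucdm0\bul)'<0$ directly), and the membership half is fine apart from the cosmetic point that the chain $y_k'>m-1$ fails at $k=0$, where $y_0'=-1$ still lies in $\Omega$ because $m>p$.

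The genuine gap is exactly where you flag it: the case $v=m$ of an isolated trailing digit $m$. Your inequality $P'<\frac{m\beta^2}{\beta^2-1}$ rules out $s\geq1$ only when $y_{k-1}'>\frac{m\beta^2}{\beta^2-1}-\beta$, i.e.\ when $\beta^{2k-1}>1+\frac{m\beta^2}{\beta^2-1}$; for the golden ratio and $m=100$, say, this already fails at $k=2$. The residual set of indices is therefore of size roughly $\tfrac12\log_\beta m$ --- not uniformly finite in $m$ --- and the proposed fallback (an extremal bound on the $\beta$-coordinate of elements of $X^m(\beta)$) is neither formulated nor verified, so as written the proof does not establish the corollary. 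The repair is available inside your own setup and is what the paper does in~\eqref{eq:sest}: when the before-last digit $d=s$ is $\geq1$, the reduced representation forbids the pair $cd$ with $c\geq p$, so $c\leq p-1$, and Remark~\ref{pozn} gives $(u\bul)'\in(-1,\beta)$; hence
$$
P'=(uc\bul)'=-\frac{(u\bul)'}{\beta}+c<\frac1\beta+p-1=\beta-1\,,
$$
so that $y_{k-1}'=P'-\beta s<-1$, contradicting $y_{k-1}'>0$ for every $k\geq2$ and every $m$. (Equivalently, the paper's direct estimate $y_k'<m-\beta^{-2}$ against~\eqref{eq:pet}.) One further sub-case needs a word: if $w=\epsilon$ and $y_k=um\bul$, the digit preceding $s$ need only be $\leq p$, but there Remark~\ref{pozn} applied to $z=u\bul$ gives $z'\in(-1,\beta)$ while $z'=-y_{k-1}'/\beta<0$, forcing $y_{k-1}'<\beta$, again incompatible with $y_{k-1}'\geq m-\beta^{-2}$.
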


\begin{proof}
All numbers $y_k=\overline1(0m)^k\bul$, $k\in\N$, belong to $\Sigma_\beta(\Omega)$
since
$$
\begin{aligned}
 -1=y_0'<0 &< y_1'=-\frac1{\beta^2}+m  \leq y_k'=-(\beta')^{2k} + \sum_{i=0}^{k-1}
 m(\beta')^{2i}= \\
 &=-\frac{1}{(-\beta)^{2k}}+ \sum_{i=0}^{k-1}
\frac{m}{(-\beta)^{2i}}< \sum_{i=0}^\infty
\frac{m}{\beta^{2i}}=\frac{m\beta^2}{\beta^2-1}\,.
\end{aligned}
$$
Let us now show that $y_k\notin X^m(\beta)$ for all $k\in\N$.
First realize that $y_0=-1\notin X^m(\beta)\subset[0,+\infty)$.
For contradiction, suppose $k\geq 1$ is the minimal index such that $y_k\in X^m(\beta)$.
Find a representation of $y_k$ in the form $uwv\bul$ as given in Lemma~\ref{l:Katka2}. We will use that $-1<(u\bul)'<\beta$, as follows from Remark~\ref{pozn}.
Let us show that this representation does not have a suffix $v=(m0)^i$. Note that
$y_k'>0$ for $k\geq 1$. We may observe that numbers of the form
$y=ucdm0\bul$ have negative Galois conjugate, namely
$$
\begin{aligned}
y'&=(ucdm0\bul)'=\big((u\bul)\beta^4+c\beta^3 +d\beta^2 + m\beta\big)' =\\
&=
\frac{(u\bul)'}{(-\beta)^4}+\frac{c}{(-\beta)^3}+\frac{d}{(-\beta)^2}+\frac{m}{-\beta}<
\frac{\beta}{\beta^4}+\frac{m-1}{\beta^2}-\frac{m}{\beta}<0\,.
\end{aligned}
$$
For numbers of the form $ucd(m0)^i\bul$, $i\geq 1$, we have
$$
(ucd(m0)^i\bul)'\leq (ucdm0\bul)'<0\,.
$$
This proves that the representation of $y_k$ obtained from
Lemma~\ref{l:Katka2} is of the form $ucd(m0)^im\bul$ or $ucd\bul$.
The latter can be excluded by the following argument. Using
$(u\bul)'<\beta$, we get
\begin{equation}\label{eq:ctyri}
y_k'=\big(ucd\bul\big)'\leq \big(\beta^2(u\bul) + m-1\big)' =\frac{(u\bul)'}{\beta^2}
+ m-1 < \frac1\beta+m-1\,.
\end{equation}
On the other hand, since $k\geq 1$, we have
\begin{equation}\label{eq:pet}
y'_k\geq y'_1=(\overline{1}0m\bul)'
=m-\frac{1}{\beta^2}\,,
\end{equation}
which contradicts~\eqref{eq:ctyri}, since $m-1+\frac1\beta\leq m-\frac1{\beta^2}$ for every $\beta\geq \frac12(1+\sqrt5)$.
We have shown that the
representation of $y_k$ is of the form $ucd(m0)^im$ for some
$i\geq 0$. We next show that the before-last digit is indeed 0.
Otherwise, by item 2 of Lemma~\ref{l:Katka2}, the representation of $y_k$ is $ucdm\bul$ with $d\geq
1$, $c\leq p-1$. We estimate
\begin{equation}\label{eq:sest}
y_k'=(ucdm\bul)' = m -\frac{d}\beta + \frac{c}{\beta^2} -
\frac{(u\bul)'}{\beta^3} < m - \frac1\beta + \frac{p-1}{\beta^2} +
\frac1{\beta^3} = m-\frac1{\beta^2}\,,
\end{equation}
where we use that $(u\bul)'>-1$. Again, \eqref{eq:pet} contradicts
\eqref{eq:sest}. This proves that the representation of $y_k$ has suffix $0m$. It follows that the
number $y_{k-1}=\frac1{\beta^2}(y_k-m)$ has also a representation
in the base $\beta$ with non-negative digits, i.e. $y_{k-1}\in
X^m(\beta)$. This is a contradiction with the choice of $k$ as the
minimal index such that $y_{k}\in X^m(\beta)$.
\end{proof}

\section{Positional representations of numbers}\label{sec:representace}

Consider a real basis $\gamma$, $|\gamma|>1$, and a finite set $\A\ni 0$ of consecutive integers. An expression of a number $w\in\R$ in the form
$$
w=\sum_{i=0}^\infty\frac{D_i}{\gamma^i}, \quad D_i\in\A\,,
$$
is called a $(\gamma,\A)$-representation of $w$. We usually write $w=D_0\bullet D_1D_2D_3\cdots$. Denote ${\mathcal I}_{\gamma,\A}$ the set of real numbers $w$ having a $(\gamma,\A)$-representation. If the alphabet of digits is sufficiently large, then ${\mathcal I}_{\gamma,\A}$ is an interval.
More precisely, let $\A=\{a,\dots,0,1,\dots,A\}$, $a,A\in\Z$. If $A-a>|\gamma|-1$, then
\begin{equation}\label{eq:IgammaA}
{\mathcal I}_{\gamma,\A} = \begin{cases}
\Big[\frac{a\gamma}{\gamma-1},\frac{A\gamma}{\gamma-1}\Big] &\text{ if } \gamma>1\,,\\[2mm]
\Big[(A+a\gamma)\frac{\gamma}{\gamma^2-1},(A\gamma+ a)\frac{\gamma}{\gamma^2-1}\Big] &\text{ if } \gamma<-1\,.
\end{cases}
\end{equation}
A variant of the above statement for positive bases can be found
in~\cite{Pedicini}, it can, however, be simply verified by
checking that
$$
{\mathcal I}_{\gamma,\A} = \A + \frac1{\gamma}{\mathcal {\mathcal
I}_{\gamma,\A}}\,.
$$
This equality can be written in a more suitable way which gives an algorithm for finding a $(\gamma,\A)$-representation of a given number, namely,
\begin{equation}\label{eq:jakcifra}
\forall\, w\in{\mathcal I}_{\gamma,\A}\ \exists\, D\in\A \text{ and } \exists\, w_{\text{new}}\in{\mathcal I}_{\gamma,\A}\ \text{ such that } w=D+\frac{w_{\text{new}}}{\gamma}\,.
\end{equation}
The digit $D$ may not be given uniquely and almost all numbers have more than one $(\gamma,\A)$-representations.

In the rest of the section we provide an explicit prescription for a function $D:{\mathcal I}_{\gamma,\A}\to \A$ which allows us to find $(\gamma,\A)$-representations with specific properties. Besides the alphabet $\A$, we will use the alphabet $\B=\{b,\dots,0,1,\dots,B\}\subset\Z$
such that
\begin{equation}\label{eq:X}
a\leq b \leq 0 \leq B\leq A\quad\text{ and }\quad B-b=\big\lfloor|\gamma|\big\rfloor\,.
\end{equation}
Such an alphabet $\B$ has the minimal size ensuring that ${\mathcal I}_{\gamma,\B}$ is an interval. Obviously
$$
\B\subset\A \quad\text{ and }\quad {\mathcal I}_{\gamma,\B}\subset {\mathcal I}_{\gamma,\A}\,.
$$
We will give the prescription of the function $D$ separately for a
positive base $\gamma>1$ and a negative base
$\gamma<-1$. Both prescriptions ensure that one can find an
interval $I$ of length $|\gamma|$ for which $D(w)\in\B$ when $w\in
I$, and $I$ is an `attractor' of the transformation
$T(w):=\gamma\big(w-D(w)\big)$. In particular, for every $w$ from
the interior of ${\mathcal I}_{\gamma,\A}$ the iterations $T^k(w)$
belong to $I$ for sufficiently large $k$, and thus the
corresponding $(\gamma,\A)$-representation of $w$ contains
eventually only digits from the alphabet $\B$. This property is
formulated as Lemma~\ref{l:oatraktoru}.

We will use the notation $\ell,r$ for the left and right end-point of the interval ${\mathcal I}_{\gamma,\A}$, i.e. ${\mathcal I}_{\gamma,\A}=[\ell,r]$
where
$$
\begin{array}{llll}
\text{for the basis $\gamma>1$}   &\text{ one has } & \ell=\frac{a\gamma}{\gamma-1}\,, & r=\frac{A\gamma}{\gamma-1}\,,\\[2mm]
\text{for the basis $\gamma<-1$} &\text{ one has } & \ell=(A+a\gamma)\frac{\gamma}{\gamma^2-1}\,, & r=(a+A\gamma)\frac{\gamma}{\gamma^2-1}\,.
\end{array}
$$
For a parameter $L$ satisfying $\ell < L+a+1 \leq L+A <r$, we define
\begin{eqnarray*}
{\mathcal I}_a &=& [\ell,L+a+1)\,,\\
{\mathcal I}_k &=& [L+k,L+k+1)\,, \text{ for } a<k<A\,,\\
{\mathcal I}_A &=& [L+A,r]\,.
\end{eqnarray*}
Clearly, ${\mathcal I}_{\gamma,\A} = \bigcup_{k\in\A}{\mathcal I}_k$. The digit assignment $D: {\mathcal I}_{\gamma,\A} \to \A$ is then defined by
$$
D(w)=k \quad\text{ if }\quad w\in {\mathcal I}_{k}.
$$
Denote by $T$ the transformation
\begin{equation}\label{eq:T}
T(w)=\gamma\big(w-D(w)\big)\,.
\end{equation}
The following lemma specifies the value of $L$, so that $T$ is a
transformation on the interval ${\mathcal I}_{\gamma,\A}$, and
summarizes other useful properties. We do not include the proof,
which is straightforward.

\begin{lem}\label{l:oatraktoru}
With the above notation, denote by $I$ the interval
$I=\gamma\cdot[L,L+1)$ where
$$
L=\frac{b}{\gamma-1} \ \text{ if }\ \gamma>1 \qquad\text{ and }\qquad L=\frac{b-\gamma}{\gamma-1} \ \text{ if }\ \gamma<-1.
$$
Then
\begin{enumerate}
 \item[(i)] $T:{\mathcal I}_{\gamma,\A}\to{\mathcal I}_{\gamma,\A}$\,;
 \item[(ii)] $T(\I_k)\subset I$ for $a<k<A$\,;
 \item[(iii)] $I\subset \bigcup_{k\in\B}{\mathcal I}_k$\,;
 \item[(iv)] $T(I)\subset I$\,;
 \item[(v)] for every $w$ belonging to the interior of ${\mathcal I}_{\gamma,\A}$, there exists $k\in\N$ such that $T^k(w)\in I$.
\end{enumerate}
\end{lem}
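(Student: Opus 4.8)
The plan is to obtain all five items from two elementary observations: that on each \emph{interior} branch $\I_k$ (with $a<k<A$) the map $T$ is the affine bijection $w\mapsto\gamma(w-k)$ carrying $\I_k=[L+k,L+k+1)$ exactly \emph{onto} $\gamma[L,L+1)=I$, which is precisely item (ii); and that the prescribed value of $L$ pins the endpoints of $I$ to the lattice $\{L+k\}$. Concretely I would first record the defining identity for $I$: from $L=\tfrac{b}{\gamma-1}$ one gets $\gamma L=L+b$ when $\gamma>1$, while from $L=\tfrac{b-\gamma}{\gamma-1}$ one gets $\gamma(L+1)=L+b$ when $\gamma<-1$. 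Since multiplication by $\gamma$ reverses orientation exactly when $\gamma<0$, in both cases the \emph{left} endpoint of $I=\gamma[L,L+1)$ equals $L+b$, and since $|I|=|\gamma|$ its right endpoint equals $L+b+|\gamma|$. This single computation turns everything else into bookkeeping.

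With the endpoints of $I$ in hand, (iii) and the remaining content of (i) come together. Because $B-b=\lfloor|\gamma|\rfloor$, the left endpoint $L+b$ is the left end of $\I_b$ (coinciding with $\ell$ when $b=a$), and the right endpoint $L+b+|\gamma|$ lies in $[L+B,L+B+1)=\I_B$ when $B<A$ (using $\lfloor|\gamma|\rfloor\le|\gamma|<\lfloor|\gamma|\rfloor+1$), and does not exceed $r$ when $B=A$. Hence $I\subset\bigcup_{k\in\B}\I_k$, which is (iii), and in particular $I\subset\I_{\gamma,\A}$. For (i) I would then treat the two outer branches: a short computation gives $T(\ell)=\ell$, $T(r)=r$ when $\gamma>1$ and $T(\ell)=r$, $T(r)=\ell$ when $\gamma<-1$, so that $T(\I_a)$ and $T(\I_A)$ are exactly the sub-intervals of $\I_{\gamma,\A}$ running from an endpoint of $\I_{\gamma,\A}$ to an endpoint of $I$; combined with the onto property (ii) on the interior branches this gives $T:\I_{\gamma,\A}\to\I_{\gamma,\A}$.

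Item (iv) I would deduce from (iii) by a uniform criterion. For any $w\in I$ we have $D(w)=k\in\B$ by (iii), and $T(w)=\gamma(w-k)\in\gamma[L,L+1)=I$ holds exactly when $w\in[L+k,L+k+1)$. For an interior digit this is automatic since $\I_k=[L+k,L+k+1)$; for the boundary digits $k\in\{a,A\}$ (which occur only when $b=a$ or $B=A$) one uses that the left endpoint of $I$ is $\ge L+a$ and its right endpoint is $<L+A+1$ — both already established — to confine $w$ to $[L+k,L+k+1)$. Thus $T(I)\subset I$.

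The genuinely dynamical step, and the one I expect to be the main obstacle, is (v). The idea is that the interior branches are absorbing into $I$: by (ii), if $T^j(w)$ ever lands in an interior $\I_k$ then $T^{j+1}(w)\in I$. Consequently an orbit that never meets $I$ must, at every step, sit in $\I_a\cup\I_A$ with digit $a$ or $A$. On these outer branches the relations $T(w)-\ell=\gamma(w-\ell)$ on $\I_a$ and $T(w)-r=\gamma(w-r)$ on $\I_A$ (for $\gamma>1$; with $\ell$ and $r$ interchanged when $\gamma<-1$, so that $\{\ell,r\}$ becomes a $2$-cycle) show that the distance of the orbit from $\{\ell,r\}$ is multiplied by $|\gamma|>1$ at each step — every two steps in the negative case. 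Since $\I_a\cup\I_A$ is bounded, this geometric expansion forces the orbit out of the outer branches in finitely many steps, and the image of an outer branch meets $\I_{\gamma,\A}$ only in $I$ together with interior branches, so the orbit then lands in $I$ directly or at the following step — contradicting that it avoids $I$. The only points for which this escape fails are the fixed points, respectively the $2$-cycle, $\ell,r$ themselves, which are not interior; hence every $w$ in the interior of $\I_{\gamma,\A}$ satisfies $T^k(w)\in I$ for some $k$. The care needed throughout is the orientation reversal for $\gamma<-1$, where $T$ swaps rather than fixes the endpoints and the expansion must be read off $T^2$.
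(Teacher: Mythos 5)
The paper deliberately omits a proof of this lemma (``We do not include the proof, which is straightforward''), so there is nothing to compare against line by line; your write-up supplies exactly the kind of interval bookkeeping the authors had in mind, and its overall structure --- the identity $\gamma L=L+b$ (resp.\ $\gamma(L+1)=L+b$), the exact equality $T(\I_k)=I$ on interior branches, the endpoint computation for $I$, and the expansion away from the fixed points (resp.\ the $2$-cycle) $\ell,r$ for item (v) --- is correct and suffices. Two places should be tightened. First, a small inaccuracy: the left endpoint $L+b$ of $I$ coincides with $\ell$ when $b=a$ only in the case $\gamma>1$; for $\gamma<-1$ one has $L+a-\ell=\frac{\gamma\bigl((|\gamma|-1)-(A-a)\bigr)}{\gamma^2-1}>0$ under the standing hypothesis $A-a>|\gamma|-1$, so $L+b>\ell$ strictly. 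This is harmless for (iii) (the point still lies in $\I_a$) but the parenthetical claim is wrong as stated.

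Second, and more substantively, in (v) the assertion that ``the image of an outer branch meets $\I_{\gamma,\A}$ only in $I$ together with interior branches'' is false: for $\gamma>1$ one has $T(\I_a)=[\ell,L+b)\cup I$, and $[\ell,L+b)$ intersects $\I_a$ itself. What actually makes the expansion argument work is the observation that $T(\I_a)\setminus I$ lies entirely to the left of $I$ and $T(\I_A)\setminus I$ entirely to the right of $I$; hence an orbit that avoids $I$ and (by (ii)) all interior branches can never pass from $\I_a$ to $\I_A$ or vice versa when $\gamma>1$, and must strictly alternate between them when $\gamma<-1$. Only with this in hand is ``the distance from $\{\ell,r\}$ is multiplied by $|\gamma|$ at each step'' legitimate --- otherwise a jump between the two outer branches could reset the reference point and spoil the geometric growth. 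Once you insert that one sentence, the contradiction with boundedness of $\I_a\cup\I_A$ goes through and only $w\in\{\ell,r\}$, which are not interior points, can avoid $I$ forever. With these repairs the proof is complete.
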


%
%

\medskip
We will be interested in $(\gamma,\A)$-representations which are finite, i.e.\ ending in the suffix $0^\omega$. For numbers with finite representations, Lemma~\ref{l:oatraktoru} implies the following statements.

\begin{coro}\label{c:zacatekdukazu}
Let $w\in\I_{\gamma,\A}$.
\begin{enumerate}
\item If $T(w)$ has a finite $(\gamma,\A)$-representation, then $w$ has a finite $(\gamma,\A)$-representation.
\item Let $\gamma$ satisfy $\gamma\Z[\gamma]=\Z[\gamma]$. Then $w\in\Z[\gamma]\ \Leftrightarrow\ T(w)\in\Z[\gamma]$.
\item Let $\gamma$ satisfy $\gamma\Z[\gamma]=\Z[\gamma]$. If every $w\in I\cap\Z[\gamma]$ has a finite $(\gamma,\A)$-representation,
then every $w\in \I_{\gamma,\A}^\circ\cap\Z[\gamma]$ has a finite $(\gamma,\A)$-representation.
\end{enumerate}
\end{coro}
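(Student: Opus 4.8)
The plan is to handle the three parts in order, leaning throughout on the single identity
$w = D(w) + T(w)/\gamma$ that underlies the digit algorithm: this is exactly \eqref{eq:jakcifra} with $w_{\text{new}}=T(w)$, and it follows immediately from the definition \eqref{eq:T} of $T$. The first two parts are then essentially formal consequences of this identity, while the third is where the actual content lies and is assembled from the first two together with the attractor property of $I$.

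For part~1 I would simply shift the representation of $T(w)$ one place to the right. If $T(w) = E_0\bullet E_1E_2\cdots E_N$ with all $E_i\in\A$ and trailing zeros, then substituting into $w = D(w) + T(w)/\gamma$ yields $w = D(w)\bullet E_0E_1\cdots E_N$, again a finite string of digits from $\A$ — here one uses only that $D(w)\in\A$ by construction. This is the one routine computation, and it produces a valid finite $(\gamma,\A)$-representation. For part~2 I would read both implications off the same identity, using that $\gamma\Z[\gamma]=\Z[\gamma]$ also gives $\gamma^{-1}\Z[\gamma]=\Z[\gamma]$. Since $D(w)\in\Z\subset\Z[\gamma]$, we have $w\in\Z[\gamma]\iff w-D(w)\in\Z[\gamma]$; and because $T(w)=\gamma\bigl(w-D(w)\bigr)$ with $\Z[\gamma]$ invariant under both $\gamma$ and $\gamma^{-1}$, we get $w-D(w)\in\Z[\gamma]\iff T(w)\in\Z[\gamma]$. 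Chaining these equivalences gives the claim.

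Part~3 combines parts~1 and~2 with Lemma~\ref{l:oatraktoru}. Given $w\in\I_{\gamma,\A}^\circ\cap\Z[\gamma]$, item~(v) of the lemma furnishes some $k\in\N$ with $T^k(w)\in I$. Iterating part~2 shows that every $T^j(w)$ stays in $\Z[\gamma]$, so in fact $T^k(w)\in I\cap\Z[\gamma]$; by hypothesis this point has a finite $(\gamma,\A)$-representation. I would then apply part~1 exactly $k$ times — to $T^{k-1}(w), T^{k-2}(w),\dots,w$ in turn — to propagate finiteness backwards along the orbit down to $w$ itself, since each step $T^{j}(w)=T\bigl(T^{j-1}(w)\bigr)$ is covered by part~1.

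The only genuine subtlety, and the step I would watch most carefully, is the bookkeeping in part~3: one must invoke part~2 to guarantee that the orbit never leaves $\Z[\gamma]$ (otherwise the hypothesis on $I\cap\Z[\gamma]$ could not be applied to $T^k(w)$), and one must check that part~1 really pulls finiteness back through \emph{each} individual iteration rather than only from $T^k(w)$ to $T^{k-1}(w)$. Everything else reduces to the shift identity and the ring condition $\gamma\Z[\gamma]=\Z[\gamma]$; no estimates beyond those already encoded in Lemma~\ref{l:oatraktoru} are required.
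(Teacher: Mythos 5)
Your proof is correct and follows exactly the argument the paper intends (the paper omits the proof, presenting the corollary as an immediate consequence of Lemma~\ref{l:oatraktoru} and the identity $w=D(w)+T(w)/\gamma$). All three steps --- the digit shift for part 1, the two-way invariance of $\Z[\gamma]$ for part 2, and the combination with item (v) of the lemma for part 3 --- are the standard route; nothing further is needed.
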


\section{Modified spectra as cut-and-project sets}\label{sec:modified}

From now on, we focus on spectra of quadratic units $\alpha=\pm\beta$, where $\beta>1$. In the proofs we will work with representations of numbers in base $\gamma=\frac1{\pm\beta'}$. Since $\beta$ is a unit, $\gamma$ is also a unit and
$$
\Z[\beta]=\Z[\gamma ]\quad\text{and}\quad \gamma\Z[\gamma]=\Z[\gamma]\,.
$$

The following proposition identifies the modified spectra $X^\A(\alpha)$ with cut-and-project sets. The acceptance interval $\Omega$ is in most cases given by the interior of
${\mathcal I}_{\frac1{\alpha'},\A}$. In fact, the non-zero boundary points of ${\mathcal I}_{\frac1{\alpha'},\A}$ have only infinite representation in base $\gamma=\frac1{\alpha'}$.
Set
\begin{equation}\label{eq:omega}
\Omega = {\mathcal I}_{\frac1{\alpha'},\A}^\circ\cup\{0\}\,.
\end{equation}

\begin{prop}\label{p:modified}
Let $\beta>1$ be a quadratic unit with conjugate $\beta'$. Let $\A\ni0$ be an alphabet of consecutive integers satisfying $\#\A>\beta$.
Let $\Omega$ be given by~\eqref{eq:omega}. If
$$
\alpha = -\beta\quad\text{ or }\quad
\alpha = \beta \text{ and } \{-1, 0,1\} \in\A\,,
$$
then $X^\A(\alpha) = \Sigma_\beta(\Omega)$.
\end{prop}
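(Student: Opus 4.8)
The plan is to translate the whole statement into the language of finite $(\gamma,\A)$-representations via the Galois conjugation $\star$, where $\gamma=\tfrac1{\alpha'}$ is the base introduced above. Since $\beta$ is a quadratic unit one has $|\beta'|=\tfrac1\beta$, hence $|\gamma|=\beta$, and the hypothesis $\#\A>\beta$ gives $A-a=\#\A-1>\beta-1=|\gamma|-1$, so that $\mathcal I_{\gamma,\A}$ is the interval described in~\eqref{eq:IgammaA} and $\Omega$ is well defined. The key observation is the equivalence, for $y\in\Z[\beta]=\Z[\gamma]$,
$$
y\in X^\A(\alpha)\quad\Longleftrightarrow\quad y'=\textstyle\sum_{j=0}^{n} a_j\gamma^{-j}\ \text{ for some } n\in\N,\ a_j\in\A,
$$
that is, $y\in X^\A(\alpha)$ iff its conjugate $y'$ admits a \emph{finite} $(\gamma,\A)$-representation. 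This is immediate from $(\alpha')^j=\gamma^{-j}$ together with the fact that $\star$ is a bijection of $\Z[\beta]$, so that a finite representation of $y'$ reconstructs $y=\sum a_j\alpha^j$ uniquely.

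For the inclusion $X^\A(\alpha)\subseteq\Sigma_\beta(\Omega)$, take $y\in X^\A(\alpha)$. Then $y\in\Z[\beta]$ and, by the equivalence, $y'$ has a finite $(\gamma,\A)$-representation; in particular $y'\in\mathcal I_{\gamma,\A}$. It remains to exclude that $y'$ is a nonzero endpoint of $\mathcal I_{\gamma,\A}$, which is exactly the content of the remark that the nonzero boundary points of $\mathcal I_{\gamma,\A}$ possess only infinite representations. Hence $y'\in\mathcal I_{\gamma,\A}^\circ\cup\{0\}=\Omega$ and $y\in\Sigma_\beta(\Omega)$.

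For the reverse inclusion, let $x\in\Sigma_\beta(\Omega)$, so $x\in\Z[\gamma]$ and $x'\in\Omega$. If $x'=0$ then $x=0\in X^\A(\alpha)$. Otherwise $x'\in\mathcal I_{\gamma,\A}^\circ\cap\Z[\gamma]$, and I would invoke part~3 of Corollary~\ref{c:zacatekdukazu}, legitimate since $\gamma$ is a unit and $\gamma\Z[\gamma]=\Z[\gamma]$: it reduces the existence of a finite $(\gamma,\A)$-representation of every point of $\mathcal I_{\gamma,\A}^\circ\cap\Z[\gamma]$ to the single claim that \emph{every} $w\in I\cap\Z[\gamma]$ has a finite $(\gamma,\A)$-representation, where $I$ is the attractor interval of Lemma~\ref{l:oatraktoru}. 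Granting this claim, $x'$ has a finite $(\gamma,\A)$-representation and the equivalence above returns $x\in X^\A(\alpha)$.

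The heart of the matter --- and the step I expect to be the main obstacle --- is therefore the claim about $I\cap\Z[\gamma]$. For $w\in I\cap\Z[\gamma]$ the orbit $w_k=T^k(w)$ stays in $I$ and uses only digits $D_k\in\B$ (parts (iii)--(iv) of Lemma~\ref{l:oatraktoru}) and remains in $\Z[\gamma]$ (part~2 of Corollary~\ref{c:zacatekdukazu}). Conjugating the recurrence $w_k=\gamma(w_{k-1}-D_{k-1})$ gives $w_k'=\gamma'(w_{k-1}'-D_{k-1})$ with $|\gamma'|=\tfrac1\beta<1$ and $D_{k-1}$ bounded, so the conjugates $w_k'$ are eventually trapped in a bounded interval; since simultaneously $w_k\in I$, the pairs $(w_k,w_k')$ lie in a bounded region of the lattice $\Z[\beta]$ and the orbit is eventually periodic. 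A finite representation is obtained exactly when the orbit reaches the fixed point $0$ (the alphabet $\B$, equivalently the value $L$, is chosen so that $0\in\mathcal I_0$ and $T(0)=0$), so what must be ruled out is a nontrivial $T$-cycle inside $I\cap\Z[\gamma]$. This is where the hypotheses $\alpha=-\beta$, or $\alpha=\beta$ with $\{-1,0,1\}\subseteq\A$, are indispensable: for a purely periodic point one has $w(1-\gamma^{-m})=\sum_{i=0}^{m-1}D_i\gamma^{-i}$ with $D_i\in\B$, and I would combine the conjugate bound with the arithmetic of the quadratic field, treating the two sign choices and the two forms $\beta^2=p\beta\pm1$ separately, to force all $D_i=0$ and hence $w=0$. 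That nontrivial cycles can genuinely occur in the unmodified, positive-base, nonnegative-digit situation is precisely the phenomenon responsible for the strict inclusion in Proposition~\ref{p:nekonecneporuseni}; the role of the present hypotheses is to eliminate them, yielding the exact equality $X^\A(\alpha)=\Sigma_\beta(\Omega)$.
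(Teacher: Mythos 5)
Your reduction is sound and follows the same architecture as the paper: conjugating by $\star$ turns the claim into ``every $w\in\mathcal I_{\gamma,\A}^\circ\cap\Z[\gamma]$ has a finite $(\gamma,\A)$-representation'', Corollary~\ref{c:zacatekdukazu} shrinks this to the attractor $I$, and the boundedness of both $w_k=T^k(w)$ and of the conjugates $z_k=w_k'$ (the latter from $z_k=(z_{k-1}-D)/(\pm\beta)$ with $D\in\B$ bounded and $|{\pm\beta}|^{-1}<1$) confines the orbit to a finite subset of $\Z[\gamma]$. Up to that point you and the paper coincide.

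The genuine gap is that the decisive step is only announced, not performed. You write that you ``would combine the conjugate bound with the arithmetic of the quadratic field \dots to force all $D_i=0$ and hence $w=0$'', but this is precisely where the entire content of the proposition lives and where the hypotheses $\alpha=-\beta$, or $\alpha=\beta$ with $\{-1,0,1\}\subseteq\A$, must be made to act; without it you have only reproved that the orbit is eventually periodic, which is equally true in the classical setting $\A=\{0,\dots,m\}$ where the conclusion \emph{fails} (Proposition~\ref{p:nekonecneporuseni}). Concretely, what has to be shown is that the simultaneous constraints $w_n=c+d\beta\in I\subset[H,H+\beta]$ and $z_n=c+d\beta'$ lying in an explicit trap interval admit only the trivial integer solutions. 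The paper does this not via a periodicity/cycle equation but by first proving that the dynamics of $z_k$ drives the conjugate into $\bigl[-\tfrac{B}{\beta-1},-\tfrac{b}{\beta-1}\bigr]$ (case $\alpha=\beta$), and then observing that the hypothesis $\{-1,0,1\}\subseteq\A$ lets one take $b\leq-1$ and $B\geq1$, which squeezes that trap interval inside $(-1,1)$; combined with $H<0<H+\beta$ this forces $(c,d)=(0,0)$ by an elementary case analysis on the sign of $d$. In the case $\alpha=-\beta$ the analogous trap is $\bigl[\tfrac{b}{\beta+1},\tfrac{B}{\beta-1}\bigr]$ (or its mirror) and the solutions are $(c,d)\in\{(0,0),(1,0)\}$. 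None of this is routine bookkeeping: if you drop the digit hypothesis the trap interval is no longer contained in $(-1,1)$ and nontrivial solutions appear, which is exactly the source of the points $\overline1m^k\bul$ of Corollary~\ref{c:1}. So the proposal correctly locates the obstacle but does not surmount it; as written it is an outline of the paper's proof with its core computation missing.
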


\pfz
Consider $(\gamma,\A)$-representations of numbers in $\Omega={\mathcal I}_{\gamma,\A}$, where $\gamma=\frac1{\pm\beta'}$.
Since the Galois automorphism $x\mapsto x'$ of the quadratic field $\Q(\beta)$ is a bijection on $\Z[\beta]=\Z[\gamma]$, equality
$X^\A(\pm\beta) = \Sigma_\beta(\Omega)$ can be equivalently rewritten as equality of the sets
$$
\begin{aligned}
\big(X^\A(\pm\beta)\big)' &= \big\{ \sum_{k=0}^na_k(\pm\beta')^k : n\in\N,\ a_k\in\A\big\} = \big\{\sum_{k=0}^n\frac{a_k}{\gamma^k} : n\in\N,\ a_k\in\A \big\}\,,\\
\big(\Sigma_\beta(\Omega)\big)' &= \{z'\in\Z[\gamma] : z'\in\Omega\} = \Z[\gamma] \cap \I_{\gamma,\A}\,.
\end{aligned}
$$
As $\frac1\gamma\in\Z[\gamma]$, we have $\sum_{k=0}^n\frac{a_k}{\gamma^k}\in\Z[\gamma]$, and therefore the inclusion $\big(X^\A(\pm\beta)\big)'$ $\subset\big(\Sigma_\beta(\Omega)\big)'$ is obvious.
In order to prove the opposite inclusion, we have to show that every $w\in\Z[\gamma] \cap \I_{\gamma,\A}^\circ$ has a finite
$(\gamma,\A)$-representation. By Corollary~\ref{c:zacatekdukazu}, it suffices to consider $w$ from the attractor $I$ of the transformation $T$, as defined in~\eqref{eq:T}, and show that there exists $n\in\N$ such that $T^n(w)$ has a finite
$(\gamma,\A)$-representation. We will even find $n$ such that $T^n(w)\in\B$, where $\B$ is the alphabet given in~\eqref{eq:X}.

We will construct a sequence $(w_k)_{k\in\N}$ by the prescription
$w_0=w\in\Z[\gamma]\cap I$ and $w_k=T(w_{k-1})$. Obviously
$w_k\in\Z[\gamma]\cap I$ for all $k\in\N$. We also have a sequence
$(z_k)_{k\in\N}$, where $z_k:=w_k'$. Since $w_{k-1}\in I$, by
Lemma~\ref{l:oatraktoru}, we have
\begin{equation}\label{eq:wn}
w_k=\gamma\big(w_{k-1}-D(w_{k-1})\big)\,,\quad\text{where }
D(w_{k-1})\in\B=\{b,\dots,B\}\,.
\end{equation}
In what follows, we will distinguish the proof of statement 1 and statement 2 of the proposition.

\smallskip
\noindent
{\bf Proof of statement 1.} We aim to show $X^\A(\beta) \supset \Sigma_\beta(\Omega)$. In this case, we have $\gamma=\frac1{\beta'}$, and therefore
\begin{equation}\label{eq:defzn}
z_k = \gamma'\big(w_{k-1}'-D(w_{k-1}')\big) =
\frac{z_{k-1}-D}{\beta}\,,\quad\text{where }
D\in\B=\{b,\dots,B\}\,.
\end{equation}
Let us study the relation of numbers $y$ and $y_{\text{new}}$, where $y_{\text{new}}$ is given by
$y_{\text{new}} = \frac1\beta(y-D)$ with $D\in\{b,\dots,0,\dots,B\}$.
One can easily check that
\begin{itemize}
\item
if $y\in\big[-\frac{B}{\beta-1},-\frac{b}{\beta-1}\big]$, then $y_{\text{new}}\in\big[-\frac{B}{\beta-1},-\frac{b}{\beta-1}\big]$;
\item
if $y>-\frac{b}{\beta-1}$, then $-\frac{B}{\beta-1}<y_{\text{new}}< y$;
\item
if $y<-\frac{B}{\beta-1}$, then $-\frac{b}{\beta-1}>y_{\text{new}}> y$.
\end{itemize}
This, together with~\eqref{eq:defzn} implies that there exists $n\in\N$ such that
\begin{equation}\label{eq:zn}
z_n\in \big[-\frac{B}{\beta-1},-\frac{b}{\beta-1}\big]\,.
\end{equation}
Since both $w_n$ and $z_n$ belong to $\Z[\beta]=\Z[\gamma]$, there exist $c,d\in\Z$ such that $w_n=c+d\beta$ and $z_n=c+d\beta'$.
We have
\begin{equation}\label{eq:dvenerovnosti}
w_n=c+d\beta\in I\quad\text{ and }\quad z_n=c+d\beta'\in\big[-\frac{B}{\beta-1},-\frac{b}{\beta-1}\big]\,.
\end{equation}
Now we need to distinguish between two cases, according to the minimal polynomial of~$\beta$.
\begin{itemize}
\item
Let $\beta^2=p\beta+1$, $p\in\N$, $p\geq 1$.  In this case $\gamma=\frac1{\beta'}=-\beta<-1$ and by Lemma~\ref{l:oatraktoru},
we have $I\subset[H,H+\beta]$, where $H=\beta\frac{b-1}{\beta+1}$. From~\eqref{eq:dvenerovnosti}, we obtain for $c,d\in\Z$ the inequalities
\begin{eqnarray}
H&\leq c+d\beta &\leq H+\beta\,, \label{eq:3hv1}\\
-\frac{B}{\beta-1} &\leq c+d\beta' &\leq -\frac{b}{\beta-1} \label{eq:3hv2}\,,
\end{eqnarray}
In order to find $c,d\in\Z$ satisfying~\eqref{eq:3hv1}
and~\eqref{eq:3hv2}, realize that from the definition of $H$ and
properties $b\leq -1$, $B\geq 1$, $B-b=\lfloor\beta\rfloor$, one
derives
\begin{equation}\label{eq:2hv}
H<0,\quad H+\beta>0,\quad -1<-\frac{B}{\beta-1}<0,\quad 0<-\frac{b}{\beta-1}<1\,.
\end{equation}
If $d\geq 1$, then~\eqref{eq:3hv1} implies that $c\leq -1$.
Therefore $c-\frac1\beta d\leq-1-\frac1\beta<-1$, which
contradicts~\eqref{eq:3hv2}. Similarly, if $d\leq -1$,
then~\eqref{eq:3hv1} implies that $c\geq 1$. Therefore
$c-\frac1\beta d\geq1+\frac1\beta>1$, which again
contradicts~\eqref{eq:3hv2}. We can therefore conclude that the
only pair of integers $c,d$ satisfying both~\eqref{eq:3hv1}
and~\eqref{eq:3hv2} is $c=d=0$, i.e.\ we necessarily have $w_n=0$, as desired.

\item
Let $\beta^2=p\beta-1$, $p\in\N$, $p\geq 3$.  In this case $\gamma=\frac1{\beta'}=\beta>1$ and by Lemma~\ref{l:oatraktoru},
we have $I\subset[H,H+\beta]$, where $H=\frac{b\beta}{\beta-1}$. From~\eqref{eq:dvenerovnosti}, we obtain for $c,d\in\Z$ the
same inequalities as~\eqref{eq:3hv1} and~\eqref{eq:3hv2}, which are again valid only for $c=d=0$.
\end{itemize}

\smallskip
\noindent
{\bf Proof of statement 2.}
We need to show that $X^{\A}(-\beta)\supset \Sigma_\beta(\Omega)$. We consider $\gamma=\frac1{-\beta'}$.
Thus from~\eqref{eq:wn}, we have
\begin{equation}\label{eq:defzn2}
z_k = \gamma'\big(w_{k-1}'-D(w_{k-1}')\big) =
\frac{z_{k-1}-D}{-\beta}\,,\quad\text{where }
D\in\B=\{b,\dots,B\}\,.
\end{equation}
Consider the relation of numbers $y$ and $y_{\text{new}}$, which satisfy $y_{\text{new}} = -\frac{y-D}{\beta}$, with $D\in\{b,\dots,0,\dots,B\}$. Denote $M=\max\{B,-b\}$. It can be easily checked that
\begin{itemize}
\item
if $|y|\leq\frac{M}{\beta-1}$, then $|y_{\text{new}}|\leq \frac{M}{\beta-1}$;
\item
if $|y|>\frac{M}{\beta-1}$, then $|y_{\text{new}}|< |y|$;
\item
if $y<\frac{b}{\beta+1}$, then $y_{\text{new}}> \frac{b}{\beta+1}$;
\item
if $y>\frac{B}{\beta+1}$, then $y_{\text{new}}< \frac{B}{\beta+1}$;
\end{itemize}
From the first two items, we can derive that eventually, $z_k\in
\big[-\frac{M}{\beta-1},\frac{M}{\beta-1}\big]$. The latter items
ensure that for some $n\in\N$, we have
\begin{equation}
\begin{array}{ll}
z_n\in \big[\frac{b}{\beta+1},\frac{B}{\beta-1}\big] &\quad\text{ if }M=B\,,\\
z_n\in \big[\frac{b}{\beta-1},\frac{B}{\beta+1}\big] &\quad\text{ if }M=-b\,.
\end{array}
\end{equation}
Fix such an $n$, and without loss of generality consider $M=B$.
If $c,d\in\Z$ are such that $w_n=c+d\beta$, then $c,d$ satisfy
\begin{eqnarray}
H&\leq c+d\beta &\leq H+\beta\,, \label{eq:4hv1}\\
\frac{b}{\beta+1} &\leq c+d\beta' &\leq \frac{B}{\beta-1} \label{eq:4hv2}\,,
\end{eqnarray}
where $H$ is chosen so that the attractor is equal to $I\subset[H,H+\beta]$. Its value again depends on the minimal polynomial of $\beta$.
It is not difficult to see that the only solution of such a system of inequalities are pairs $(c,d)=(0,0)$ and $(c,d)=(1,0)$, i.e.
$w_n=0$ or $w_n=1$.
\pfk

\begin{proof}[Proof of Theorem~\ref{t:2}]
It suffices to combine Propositions~\ref{p:modified} stating that
the modified spectrum is equal to a cut-and-project set, with
Proposition~\ref{p:cap3} which describes the distances between
consecutive elements of a cut-and-project set.
\end{proof}

\section{Distances in classical spectra}\label{sec:classical}

Let us return to the original question of classical spectra
$X^m(\beta)$ with positive base $\beta>1$ and alphabet of digits
$\A=\{0,1,\dots,m\}$. By Proposition~\ref{p:XY}, for quadratic
Pisot unit $\beta$, the spectrum is included in a cut-and-project
set, $X^m(\beta)\subset\Sigma_\beta(\Omega)$. Similarly as in the case of modified spectra, the acceptance interval
$\Omega$ is equal to ${\mathcal I}_{\gamma,\A}^\circ\cup\{0\}$, where $\gamma=\frac1{\beta'}$.

Unlike the case of modified spectra, by Proposition~\ref{p:nekonecneporuseni}, the inclusion $X^m(\beta)\supset\Sigma_\beta(\Omega)$ is
not valid for general $m$, even if we restrict ourselves to any
interval $[K,+\infty)$. Here, we aim to show that nevertheless,
one can conclude about the values of distances in the spectrum.
First we describe the elements of $\Sigma_\beta(\Omega)$ not
belonging to $X^m(\beta)$.


\begin{lem}\label{l:oS1}
Let $\beta>1$ be a root of $x^2-px+1$, $p\geq 3$, let $m\in\N$,
$m\geq \lfloor\beta\rfloor$. Let $\Omega$ be as in
Proposition~\ref{p:XY}. Then there exists a finite set ${\mathcal
S}$ such that for every $z>0$ satisfying
$$
z\in\Sigma_\beta(\Omega) \quad\text{and}\quad z\notin X^m(\beta)
$$
there exist $j\in\N$ and $s\in\S$ such that
$$
z=m\sum_{i=0}^{j-1}\beta^{i}+ s\beta^{j}\,.
$$
\end{lem}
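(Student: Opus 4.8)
The plan is to show that every such $z$ reduces, by stripping the digit $m$ off the bottom of its expansion, to a ``seed'' that lives in a fixed bounded window of $\Z[\beta]$, and that such seeds form a finite set. Write $R=\frac{m\beta}{\beta-1}$ for the right endpoint of $\Omega$, so $\Omega=[0,R)$, and set $\delta=R-z'\in(0,R]$. First I would introduce the stripping map $z\mapsto\hat z=(z-m)/\beta$, which stays in $\Z[\beta]$ since $\beta$ is a unit. The decisive elementary computation is that it acts on the conjugate by $\hat z'=(z'-m)\beta$, hence on the distance to the right endpoint by $R-\hat z'=\beta\delta$: one stripping multiplies $\delta$ by $\beta$. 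Iterating $j$ times yields $z_j\in\Z[\beta]$ with $R-z_j'=\beta^j\delta$ and, by telescoping, the exact identity $z=m\sum_{i=0}^{j-1}\beta^i+z_j\beta^j$. Thus $s:=z_j$ already produces the required shape for any $j$, and the whole problem is to pick $j$ so that $s$ is forced into a finite set.

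Next I would fix $j$ minimal with $z_j'<0$, i.e.\ with $\beta^j\delta>R$; this $j$ exists because $\beta^j\delta\to\infty$, and $j\ge1$ since $z_0'=z'\ge0$. Minimality gives $\beta^{j-1}\delta\le R<\beta^j\delta\le\beta R$, so, using $R(1-\beta)=-m\beta$, the conjugate of the seed satisfies $s'=R-\beta^j\delta\in[-m\beta,0)$. On the other side, the hypothesis $z>0$ applied to the identity above gives at once the lower bound $s>-\frac{m}{\beta-1}$. What remains — and this is the heart of the matter — is an upper bound on $s$, namely to exclude $s>0$.

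The hard part is exactly this exclusion, and I would handle it through $\beta$-integers. Suppose $s>0$. Then $z_{j-1}=\beta s+m>m$, and $z_{j-1}\notin X^m(\beta)$: otherwise prepending $j-1$ digits equal to $m$ to a representation of $z_{j-1}$ would exhibit $z\in X^m(\beta)$. Moreover $z_{j-1}'=R-\beta^{j-1}\delta\in[0,m)$, since $\beta^{j-1}\delta\in(R/\beta,R]$. Now the key input is Remark~\ref{l:Katka1a}: every $w>0$ with $w'\in[0,\beta)$ is a nonnegative $\beta$-integer, hence lies in $X^{\lfloor\beta\rfloor}(\beta)\subset X^m(\beta)$. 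Applying this to $w=z_{j-1}$ rules out $z_{j-1}'<\beta$, so $z_{j-1}'\in[\beta,m)$; stripping the digit $d=\lfloor z_{j-1}'\rfloor\le m-1$ sends the conjugate of $(z_{j-1}-d)/\beta$ into $[0,\beta)$, whence $(z_{j-1}-d)/\beta<0$ (were it $\ge0$ it would be a $\beta$-integer and again force $z_{j-1}\in X^m(\beta)$). But then $z_{j-1}=d+\beta\cdot(z_{j-1}-d)/\beta<d\le m-1<m$, contradicting $z_{j-1}>m$. Hence $s\le0$, and since $s'<0$ in fact $s<0$.

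Finally I would assemble the bounds: the seed satisfies $-\frac{m}{\beta-1}<s<0$ and $-m\beta\le s'<0$, so the pair $(s,s')$ lies in a bounded region of the plane. Since $\{(x,x'):x\in\Z[\beta]\}$ is a lattice, only finitely many values of $s$ can occur; taking $\S$ to be this finite set of admissible seeds completes the proof. The only genuinely delicate point is the upper bound of the third paragraph (excluding positive seeds), which is where the description of the spectrum near its ``top'' through nonnegative $\beta$-integers is essential; everything else is just the bookkeeping of the $\delta\mapsto\beta\delta$ dynamics.
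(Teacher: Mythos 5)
Your proof is correct. It shares the paper's skeleton --- iterate the digit-stripping map $z\mapsto(z-m)/\beta$ and stop at a sign change --- but the execution is genuinely different. The paper runs the transformation $T(w)=\beta\bigl(w-D(w)\bigr)$ of Lemma~\ref{l:oatraktoru} on the conjugate side, stops at the \emph{last} index $n-1$ with $z_{n-1}>0$ (sign of the element itself), and only a posteriori proves, via item (ii) of Lemma~\ref{l:oatraktoru} together with Remark~\ref{l:Katka1a}, that every digit encountered before the stopping time equals $m$; its seed set is $\S=(0,A)\cap\Sigma_\beta(\Omega)$. You instead strip the digit $m$ unconditionally, which turns the conjugate recursion into the exact law $R-z_j'=\beta^j(R-z')$, and you stop at the \emph{first} index where the conjugate goes negative; your seeds are negative, with $s\in\bigl(-\tfrac{m}{\beta-1},0\bigr)$ and $s'\in[-m\beta,0)$. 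The price of bypassing the machinery of Section~\ref{sec:representace} (no digit function $D$, no attractor, no Corollary~\ref{c:zacatekdukazu}) is your third paragraph, where positive seeds must be excluded by hand; your two-step greedy argument there is sound and rests on the same key input the paper uses (Remark~\ref{l:Katka1a}: a positive element of $\Z[\beta]$ with conjugate in $[0,\beta)$ is a $\beta$-integer, hence lies in $X^m(\beta)$), just deployed at a different point of the argument. Both versions of the finite set $\S$ serve equally well in the downstream application (Proposition~\ref{p:classical}), where only the finiteness of $\max_{s\in\S}\bigl|s'-\tfrac{m\beta}{\beta-1}\bigr|$ matters.
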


\begin{proof}
Consider the notation of and before Lemma~\ref{l:oatraktoru} where
we have $\gamma=\frac1{\beta'}=\beta$ and $\A=\{0,1,\dots,m\}$, i.e.
$A=m$ and $a=b=0$. This implies that $L=0$ and the attractor of
$T$ is thus of the form $I=[0,\beta)$. By Remark~\ref{l:Katka1a},
if a positive $z\in\Z[\beta]$ satisfies $z'\in[0,\beta)=I$, then
$z\in X^{\lfloor\beta\rfloor}(\beta)\subseteq X^m(\beta)$.

Suppose that $z>0$, $z\in\Z[\beta]$, $z'\in\Omega=\I_{\gamma,\A}^\circ\cup\{0\}$
such that $z\notin X^m(\beta)$. In this case $z'$ does not have a
finite $(\gamma,\A)$-representation. Using the transformation
$T(w)=\gamma\big(w-D(w)\big)$ we again construct sequences
$(w_k)_{k\geq 0}$, $(z_k)_{k\geq 0}$ by the recurrence $w_0=z'$,
$w_k=T\big(w_{k-1}\big)$, and
$$
z_0=z>0,\quad z_k=w'_k = \frac{z_{k-1}-D(w_{k-1})}{\beta}\,.
$$
By the recurrence for $z_k$, we see that $z_k<z_{k-1}$ when
$z_{k-1}$ is positive. Since $w_k=z_k'\in \Omega$ for all
$k\in\N$, we have $z_k\in\Sigma_\beta(\Omega)$ which is a discrete
set. Therefore there exists an index $n\in\N$ such that
$z_n<0<z_{n-1}$. As $z_n=\frac1\beta(z_{n-1}-D(z_{n-1}'))<0$, we
derive $0<z_{n-1}<D(z_{n-1})\leq A$. In other words, $z_{n-1}$
belongs to the set
\begin{equation}\label{eq:S}
\S:=\big\{z\in\Z[\beta] : z\in (0,A), z'\in
\Omega\big\}\,.
\end{equation}
We have $\S=(0,A)\cap\Sigma_\beta(\Omega)$, which implies that
$\S$ is a finite set. By item 1 of
Corollary~\ref{c:zacatekdukazu}, we obtain $z_{k-1}\notin
X^m(\beta)\Rightarrow z_k\notin X^m(\beta)$, and thus for all
$k=0,1,\dots,n-1$ we have $z_k>0$ and $z_k\notin X^m(\beta)$. It
follows that $z_k'\notin I$ for any $k=0,1,\dots,n-1$. Item
(ii) of Lemma~\ref{l:oatraktoru} together with the fact that
$\I_0\subset I$, guarantee that $w_k=z_k'\in \I_A$ for every
$k=0,1,\dots,n-2$, whence $D(w_k)=m$ for every $k=0,1,\dots,n-2$.
Therefore
$$
w_0=\sum_{k=0}^{n-2}\frac{D(w_k)}{\beta^k} +
\frac{w_{n-1}}{\beta^{n-1}}\,.
$$
Realizing that $w_{n-1}'=z_{n-1}\in\S$, we obtain $z=w_0'$ in the
desired form.
\end{proof}

\begin{lem}\label{l:oS2}
Let $\beta>1$ be a root of $x^2-px-1$, $p\geq 1$, let $m\in\N$,
$m> \lfloor\beta\rfloor$. Let $\Omega$ be as in
Proposition~\ref{p:XY}. Then there exists a finite set ${\mathcal
S}$ such that for every $z>0$ satisfying
$$
z\in\Sigma_\beta(\Omega) \quad\text{and}\quad z\notin X^m(\beta)
$$
there exist $j\in\N$ and $s\in\S$ such that
$$
z=m\sum_{i=0}^{j-1}\beta^{2i}+ s\beta^{2j} \quad\text{or}\quad
z=m\sum_{i=0}^{j-1}\beta^{2i+1}+ s\beta^{2j+1}\,.
$$
\end{lem}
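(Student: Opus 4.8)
The plan is to imitate the proof of Lemma~\ref{l:oS1}, the essential new feature being that here $\gamma=\frac1{\beta'}=-\beta<-1$, so the transformation $T(w)=\gamma\big(w-D(w)\big)$ has a \emph{negative} base and hence reverses orientation at each step. With the alphabet $\A=\{0,1,\dots,m\}$ one has $a=b=0$, $A=m$, $B=\lfloor\beta\rfloor=p$, and Lemma~\ref{l:oatraktoru} gives the attractor $I=\gamma[L,L+1)=\big(-\frac{\beta}{\beta+1},\frac{\beta^2}{\beta+1}\big]$, an interval of length $\beta$ contained in $(-1,\beta)$. Consequently, by Remark~\ref{pozn}, any $z>0$ in $\Z[\beta]$ with $z'\in I$ already lies in $\Z_\beta^+=X^{\lfloor\beta\rfloor}(\beta)\subseteq X^m(\beta)$; contrapositively, for the points $z$ we must treat we will always have $z'\notin I$.

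Given $z>0$ with $z\in\Sigma_\beta(\Omega)$ and $z\notin X^m(\beta)$, I would set $w_0=z'$ and build $w_k=T(w_{k-1})$, $z_k=w_k'$, exactly as in Lemma~\ref{l:oS1}. The conjugated recurrence reads $z_k=\frac{z_{k-1}-D(w_{k-1})}{\beta}$ with $D(w_{k-1})\geq0$, so $(z_k)$ strictly decreases as long as it stays positive; since $\Sigma_\beta(\Omega)$ is discrete there is a first index $n$ with $z_n<0<z_{n-1}$. From $z_n<0$ one gets $0<z_{n-1}<D(w_{n-1})\leq m$, whence $z_{n-1}$ lies in the finite set $\S=(0,m)\cap\Sigma_\beta(\Omega)$. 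By item~1 of Corollary~\ref{c:zacatekdukazu}, $z_0\notin X^m(\beta)$ forces $z_k\notin X^m(\beta)$ for all $k$, so $z_k>0$ and $z_k\notin X^m(\beta)$, hence $w_k=z_k'\notin I$, for every $k=0,\dots,n-1$.

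The heart of the argument, and the place that differs most from Lemma~\ref{l:oS1}, is to pin down the digits $D(w_k)$ for $k\leq n-2$. Because $w_{k+1}=T(w_k)\notin I$ while Lemma~\ref{l:oatraktoru}(ii) gives $T(\I_d)\subset I$ for every $0<d<m$, no such intermediate digit can occur, so $D(w_k)\in\{0,m\}$. Using the explicit intervals one checks that $\I_0\setminus I=[\ell,-\frac{\beta}{\beta+1}]$ sits to the left of $I$ (digit $0$), while $\I_m$ sits entirely to the right of $I$ (digit $m$); the negative base then makes $T$ swap these two regions: if $D(w_k)=0$ then $w_{k+1}=-\beta w_k\geq\frac{\beta^2}{\beta+1}$, forcing $w_{k+1}$ to the right (digit $m$), and if $D(w_k)=m$ then $w_{k+1}=-\beta(w_k-m)\leq\frac{\beta^2}{\beta+1}$, forcing $w_{k+1}$ to the left (digit $0$). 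Hence the digits $D(w_0),\dots,D(w_{n-2})$ strictly alternate between $0$ and $m$. Moreover $D(w_{n-1})\geq1$ (otherwise $z_n=z_{n-1}/\beta>0$), so $w_{n-1}$ lies to the right of $I$, and running the alternation backwards this forces $D(w_{n-2})=0$. I expect verifying these inclusions and transitions, with the correct treatment of the interval endpoints, to be the main obstacle.

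Finally, telescoping $z_{k-1}=D(w_{k-1})+\beta z_k$ gives
$$
z=\sum_{k=1}^{n-1}D(w_{k-1})\,\beta^{k-1}+z_{n-1}\,\beta^{n-1}\,,
$$
and substituting the alternating pattern, which ends with $D(w_{n-2})=0$ at position $n-2$, yields the two claimed shapes according to the parity of $n$: writing $s=z_{n-1}\in\S$, the case $n$ odd gives $z=m\sum_{i=0}^{j-1}\beta^{2i}+s\beta^{2j}$ with $n-1=2j$, while the case $n$ even gives $z=m\sum_{i=0}^{j-1}\beta^{2i+1}+s\beta^{2j+1}$ with $n-1=2j+1$. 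This completes the plan.
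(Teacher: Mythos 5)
Your proposal is correct and follows essentially the same route as the paper's proof: the same transformation $T$ with $\gamma=-\beta$, the same attractor $I=\bigl(-\frac{\beta}{\beta+1},\frac{\beta^2}{\beta+1}\bigr]$, the same finite set $\S=(0,m)\cap\Sigma_\beta(\Omega)$, the same stopping index $n$ with $z_n<0<z_{n-1}$, and the same digit-alternation argument forcing $D(w_k)\in\{0,m\}$ with $0$ and $m$ alternating. The one point where you go beyond the paper's written argument is in explicitly deriving $D(w_{n-1})\geq 1$, hence $w_{n-1}>\frac{\beta^2}{\beta+1}$, hence $D(w_{n-2})=0$ --- a step the paper leaves implicit but which is genuinely needed to land exactly on the two stated shapes (otherwise, e.g., $z=m+s\beta$ could occur), so this is a welcome addition rather than a deviation.
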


\begin{proof}
Consider $\gamma=\frac1{\beta'}=-\beta$, $\Omega=\I_{\gamma,\A}$.
We use the notation of Lemma~\ref{l:oatraktoru} with
$\A=\{0,1,\dots,m\}$, i.e. $A=m$ and $a=b=0$. We now have
$L=-\frac{\beta}{\beta+1}$ and the attractor of $T$ is thus of the
form $I=\big(-\frac{\beta}{\beta+1},\frac{\beta^2}{\beta+1}\big]$.
In particular, $I\subset (-1,\beta)$. By Remark~\ref{pozn}, if a
positive $z\in\Z[\beta]$ satisfies $z'\in(-1,\beta)$, then $z\in
X^{\lfloor\beta\rfloor}(\beta)\subseteq X^m(\beta)$.

Again, we construct sequences $(z_k)_{k\geq 0}$ and $(w_k)_{k\geq
0}$ by the recurrence $w_0=z'$, $w_k=T\big(w_{k-1}\big)$, and
$$
z_0=z>0,\quad z_k=w'_k = \frac{z_{k-1}-D(w_{k-1})}{\beta}\,.
$$
and find an index $n\in\N$ such that $z_n<0<z_{n-1}$. We define a
finite set $\S$ by the same prescription as in~\eqref{eq:S}. From
$z_0\notin X^m(\beta)$, we have $z_k\notin X^m(\beta)$ and thus
$z_k'\notin(-1,\beta)$. In particular, $z_k'\notin I$ for any
$k=0,\dots,n-1$.

By item (ii) of Lemma~\ref{l:oatraktoru}, we have $z_k'\in
\I_0\cup \I_A\setminus I$ for $k=0,1,\dots,n-2$. We deduce that
$D(z_k')=D(w_k)\in\{0,A\}$ for $k=0,1,\dots,n-2$. For concluding
the proof, it suffices to show that the digits $0$ and $A$
alternate, in particular, that for $k=0,1,\dots,n-2$ we have
$$
\begin{array}{lll}
D(w_k)=A &\Rightarrow & D(w_{k+1})\neq A\,, \\
D(w_k)=0 &\Rightarrow & D(w_{k+1})\neq 0\,.
\end{array}
$$
The first implication follows from $T(\I_A)\cap \I_A=\emptyset$.
For the second one, recall that $w_k\notin (-1,\beta)$ for
$k\in\{0,\dots,n-2\}$, and thus it suffices to verify
$T\big(\I_0\setminus (-1,\beta)\big)\cap \I_0=\emptyset$.

Altogether,
$$
w_0=\sum_{k=0}^{n-2}\frac{D(w_k)}{(-\beta)^k} +
\frac{w_{n-1}}{(-\beta)^{n-1}}
$$
and $z=w'_0$ has the required form.
\end{proof}

\begin{prop}\label{p:classical}
Let $\beta>1$ be a quadratic unit, $\A=\{0,1,\dots,m\}$,
$m\geq\lfloor\beta\rfloor$. Let $\Omega$ be as in
Proposition~\ref{p:XY}. Then for every $\delta>0$ there
exists $K>0$ such that
$$
[K,+\infty) \cap \Sigma_\beta\big((1-\delta)\Omega\big)
\subset X^m(\beta)\subset\Sigma_\beta(\Omega)\,.
$$
\end{prop}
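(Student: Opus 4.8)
The inclusion $X^m(\beta)\subset\Sigma_\beta(\Omega)$ is already established in Proposition~\ref{p:XY}, so the whole task is the left inclusion
$[K,+\infty)\cap\Sigma_\beta\big((1-\delta)\Omega\big)\subset X^m(\beta)$.
The plan is to argue by contradiction, exploiting the explicit description of the ``bad'' points obtained in Lemmas~\ref{l:oS1} and~\ref{l:oS2}. Suppose the claim fails for some $\delta>0$; then for every $K$ there is a point $z\geq K$ with $z'\in(1-\delta)\Omega$ but $z\notin X^m(\beta)$. In particular there are \emph{arbitrarily large} such $z$. I would treat the two minimal-polynomial cases separately, using the respective structural lemma.

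First I would recall that by Lemma~\ref{l:oS1} (case $\beta^2=p\beta+1$) every such $z$ has the form
$z=m\sum_{i=0}^{j-1}\beta^i+s\beta^j$ with $s\in\S$, a \emph{finite} set, and $j\in\N$; likewise Lemma~\ref{l:oS2} gives the analogous form with a parity-split geometric tail in the other case. The key observation is that the Galois image of such a $z$ is
$z'=m\sum_{i=0}^{j-1}(\beta')^i+s'(\beta')^j$, and since $|\beta'|<1$ the tail $s'(\beta')^j\to0$ as $j\to\infty$, while the geometric sum converges to $m\sum_{i=0}^\infty(\beta')^i=\frac{m}{1-\beta'}=\frac{m\beta}{\beta-1}$, which is precisely the relevant endpoint of $\Omega$ (compare with Proposition~\ref{p:XY}). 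Thus for the bad points $z$ the conjugate $z'$ is forced to accumulate at the boundary of $\Omega$ as $z\to\infty$: more precisely, for every $\delta>0$ there is a bound $J(\delta)$ so that $j\geq J(\delta)$ forces $z'\notin(1-\delta)\Omega$, because $(1-\delta)\Omega$ is a strictly smaller interval whose closure misses that endpoint.

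The conclusion then assembles as follows. Since $\S$ is finite and, for fixed $\delta$, the admissible exponents $j$ are bounded by $J(\delta)$, there are only \emph{finitely many} bad points $z$ whose Galois image still lies in $(1-\delta)\Omega$. Hence these finitely many exceptions are bounded above by some real number; taking $K$ strictly larger than all of them yields $[K,+\infty)\cap\Sigma_\beta\big((1-\delta)\Omega\big)\subset X^m(\beta)$, as desired. I would carry out the estimate $j\leq J(\delta)$ explicitly in each case by comparing $z'$ with the endpoint of $\Omega$ and extracting the quantitative gap created by the factor $(1-\delta)$. The main obstacle I anticipate is precisely this quantitative step: one must show that the shrinking factor $1-\delta$ genuinely excludes the boundary-accumulation regime, i.e.\ that the distance from $z'$ to the relevant endpoint of $\Omega$ decays like $|\beta'|^j$ uniformly over $s\in\S$, so that the set of surviving $j$ is finite. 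The parity split in Lemma~\ref{l:oS2} requires handling the even- and odd-index tails of $(\beta')^j$ separately, but in both subcases the tail still tends to the same endpoint, so the argument goes through with only bookkeeping differences.
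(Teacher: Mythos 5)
Your proposal follows essentially the same route as the paper's proof: both reduce the left inclusion to the structural Lemmas~\ref{l:oS1} and~\ref{l:oS2}, observe that the Galois images of the exceptional points approach the endpoints of $\Omega$ at rate $|\beta'|^{j}$ uniformly over the finite set $\S$, and conclude that $(1-\delta)\Omega$ contains only finitely many of them, so a suitable $K$ exists. The only slip is that in the case $\beta^2=p\beta+1$ the even- and odd-index families accumulate at the two \emph{different} nonzero endpoints $r$ and $\ell$ of $\Omega$ (not the same one), but this does not affect the argument since $(1-\delta)\Omega$ is bounded away from both.
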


\begin{proof}
The inclusion on the right is given by Proposition~\ref{p:XY}. In
order to prove the left inclusion, it suffices to show that for
every $\delta>0$ there exist only finitely many positive
$z\in\Z[\beta]$ such that $z\notin X^m(\beta)$ and
$z'\in(1-\delta)\Omega$. The constant $K$ is then chosen
bigger than maximum of such $z$.

Consider the case $\beta^2=p\beta+1$, $p\geq 1$. In this case
$\Omega=\big(\frac{-m\beta}{\beta^2-1},\frac{m\beta^2}{\beta^2-1}\big)=(\ell,r)$.
For positive $z\in\Z[\beta]$ not belonging to $X^m(\beta)$ with
$z'\in(1-\delta)\Omega\subset\Omega$ we can use
Lemma~\ref{l:oS2}, to derive that $z=x_j$ or $z=\beta x_j$, where
$$
x_j = m\frac{\beta^{2j}-1}{\beta^2-1}+s\beta^{2j}\,,
$$
for some $j\in\N$, $s\in\S$. For the Galois image of $x_j$, we
have
$$
x'_j =
\frac{m\beta^2}{\beta^2-1}+\frac{1}{\beta^{2j}}\Big(s'-\frac{m\beta^2}{\beta^2-1}\Big)\,.
$$
Denote $S=\max\Big\{|s'-\frac{m\beta^2}{\beta^2-1}|:s\in\S\Big\}$.
Thus
$$
x'_j>r-\frac{S}{\beta^{2j}}\quad\text{and}\quad (\beta x_j)' <\ell
+ \frac{S}{\beta^{2j+1}}\,.
$$
Indices $j\in\N$ satisfying
$$
r-\frac{S}{\beta^{2j}}<x_j'<(1-\delta)r \quad\text{ or }\quad
(1-\delta)\ell<(\beta x_j)' <\ell + \frac{S}{\beta^{2j+1}}
$$
are only finitely many, and hence also elements
$z\in\Sigma_\beta\big((1-\delta)\Omega\big)$ not belonging to
the spectrum $X^m(\beta)$ are finitely many.

The proof for the case $\beta^2=p\beta-1$, $p\geq 3$, is
analogous, using Proposition~\ref{l:oS1}.
\end{proof}

In the following section we will show that even if the spectrum is not equal to a cut-and-project set, Proposition~\ref{p:classical} allows us to state that the distances take only three values. We will provide these values in an explicit form together with their frequencies.

%

\section{Values of distances and frequencies}\label{sec:frequencies}

Sections~\ref{sec:modified} and~\ref{sec:classical} put into
connection the spectra (in general $X^\A(\alpha)$) with
cut-and-project sets. In order to determine the exact values of
distances in $X^\A(\alpha)$ and their frequencies, let us recall a
result of~\cite{MaPaPeGoslar}, providing a formula for the
distances between consecutive points of a cut-and-project sequence
$\Sigma_\beta(\Omega)=\{x\in\Z[\beta] : x'\in\Omega\}$, where
$\beta$ is a quadratic unit. We have $\beta\Z[\beta]=\Z[\beta]$,
and we can derive, directly from the definition, that
\begin{equation}\label{eq:scaling}
\begin{aligned}
\beta\Sigma_\beta(\Omega) &= \{\beta x \in \beta\Z[\beta]:
x'\in\Omega
 \}= \{y \in \Z[\beta]: \big(\tfrac{y}{\beta}\big)'\in\Omega \}=\\
 &= \{y \in \Z[\beta]: y'\in\beta'\Omega \} =
 \Sigma_\beta(\beta'\Omega)\,.
\end{aligned}
\end{equation}
Therefore it suffices to determine the distances in
cut-and-project sets with acceptance intervals of length
$|\Omega|$ for example within $(1,\beta]$. For cut-and-project
sets with other windows, the result can be simply derived by the
rescaling property~\eqref{eq:scaling}. Denote
$$
\phi_j(\beta):= \left\{\begin{array}{ll}
                \beta-j \,, & \hbox{for }j=0,1,\dots,\lfloor\beta\rfloor-1\,,\\
                1\,,        & \hbox{for }j=\lfloor\beta\rfloor\,.
                \end{array}\right.
$$
Note that $\phi_{j+1}(\beta)<\phi_j(\beta)$ and
$\bigcup_{j=1}^{\lfloor\beta\rfloor} \left( \phi_j(\beta),
\phi_{j-1}(\beta) \right] = (1,\beta]$. With this notation we can
cite the following result of~\cite{MaPaPeGoslar}.

\begin{prop}\label{p:mezerycap}
Let $\beta$ be a quadratic Pisot unit. The distances between
consecutive points of the cut-and-project set
$\Sigma_\beta(\Omega)$ take the following values:
\begin{itemize}
\item If $|\Omega|\in\big(\phi_j(\beta),\phi_{j-1}(\beta)\big)$,
$j=1,\dots,\lfloor\beta\rfloor$, then the distances are $1$,
$j-\beta'$ and $j+1-\beta'$;

\item if $|\Omega|=\phi_{j-1}(\beta)$,
$j=1,\dots,\lfloor\beta\rfloor$, and $\Omega$ is a semi-closed
interval, then the distances take two values, namely $1$,
$j-\beta'$.
\end{itemize}
\end{prop}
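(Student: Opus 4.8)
The plan is to feed Proposition~\ref{p:cap3} with explicit gaps. That proposition already tells us the distances of $\Sigma_\beta(\Omega)$ lie in $\{\Delta_1,\Delta_2,\Delta_1+\Delta_2\}$ with $\Delta_2'<0<\Delta_1'$, and that the induced map is an interval exchange with parameters $\lambda=1-\Delta_1'/|\Omega|$ and $\mu=-\Delta_2'/|\Omega|$; so everything reduces to \emph{identifying} the two lattice elements $\Delta_1,\Delta_2\in\Z[\beta]$ as functions of the single parameter $|\Omega|$ (the position of $\Omega$ being irrelevant). The scaling relation~\eqref{eq:scaling}, $\beta\Sigma_\beta(\Omega)=\Sigma_\beta(\beta'\Omega)$, multiplies every gap by $\beta$ and every window length by $|\beta'|=1/\beta$; iterating it reduces an arbitrary window to one of length in $(1,\beta]$, which is exactly the range cut into the pieces $(\phi_j(\beta),\phi_{j-1}(\beta)]$. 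Hence it suffices to settle these pieces.

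For $|\Omega|\in(\phi_j(\beta),\phi_{j-1}(\beta)]$ I would guess $\Delta_1=1$ and $\Delta_2=j-\beta'$, whence $\Delta_1+\Delta_2=j+1-\beta'$. Since $\beta$ is a unit, $\beta'=p-\beta\in\Z[\beta]$, so both candidates lie in $\Z[\beta]$, have positive real value, and have Galois images $\Delta_1'=1>0$ and $\Delta_2'=j-\beta<0$ (as $j\le\lfloor\beta\rfloor<\beta$), matching the sign requirement of Proposition~\ref{p:cap3}. The parameter formulas then give $\lambda=1-1/|\Omega|$ and $\mu=(\beta-j)/|\Omega|$, and a short computation matches the condition $0<\lambda\le\mu<1$ for a legitimate interval exchange with the defining inequalities of the piece: the conjunction $\lambda>0$ and $\mu<1$ amounts to $|\Omega|>\max(1,\beta-j)=\phi_j$, while $\lambda\le\mu$ rearranges to $|\Omega|\le\beta-j+1=\phi_{j-1}$. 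In particular $\lambda=\mu$ occurs exactly at $|\Omega|=\phi_{j-1}$, where the middle interval $I_B$ collapses; for a semi-closed $\Omega$ this degenerates to an exchange of two intervals carrying only the gaps $1$ and $j-\beta'$ (the second bullet), whereas for $|\Omega|$ strictly inside the piece all three gaps occur (the first bullet).

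The delicate point, and the step I expect to cost the real work, is to prove that the abstract $\Delta_1,\Delta_2$ furnished by Proposition~\ref{p:cap3} actually \emph{coincide} with these candidates rather than merely being compatible with them. I would characterize $\Delta_1$ (resp.\ $\Delta_2$) as the element of $\Z[\beta]$ of least positive real value whose Galois image falls in $(0,|\Omega|)$ (resp.\ in $(-|\Omega|,0)$); these are the jumps realized on the extreme branches of the exchange, and this minimality is what singles them out. To check that the minimizers are $1$ and $j-\beta'$ I would run an ``empty rectangle'' argument in the planar lattice $\{(x,x'):x\in\Z[\beta]\}$: a rival $\delta=a+b\beta$ with smaller real value and admissible Galois image forces the difference $\delta-\delta'=b(\beta-\beta')$ into a short interval determined by $|\Omega|$, and the bound $|\Omega|\le\beta$ keeps the rectangle for $\Delta_1$ empty (its nearest competitor has Galois image exactly $\beta$, sitting at the window edge), while $|\Omega|\le\phi_{j-1}$ keeps the rectangle for $\Delta_2$ empty. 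The threshold is sharp: the competing element $(j-1)-\beta'$ has Galois image of modulus exactly $\phi_{j-1}$, so it re-enters the window the instant $|\Omega|$ crosses $\phi_{j-1}$, which is precisely why $\Delta_2$ switches there and why the partition of $(1,\beta]$ by the $\phi_j$ is forced. A minor bookkeeping point is that the two minimal-polynomial cases $\beta^2=p\beta\pm1$ must be handled separately, since $\beta'$ changes sign, though the computation is uniform in spirit.
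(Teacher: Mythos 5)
The paper does not prove Proposition~\ref{p:mezerycap} at all: it is quoted verbatim as a result of~\cite{MaPaPeGoslar} (``we can cite the following result of\dots''), so there is no internal proof to compare yours against. Judged on its own terms, your reconstruction is sound and follows what is essentially the intended argument behind the cited result. The reduction by the scaling relation~\eqref{eq:scaling} to $|\Omega|\in(1,\beta]$ is exactly how the paper itself uses the proposition later, your candidates $\Delta_1=1$, $\Delta_2=j-\beta'$ are the correct ones, and your consistency check $0<\lambda\le\mu<1\iff\phi_j<|\Omega|\le\phi_{j-1}$ (using $\max\{1,\beta-j\}=\phi_j$ in both the $j<\lfloor\beta\rfloor$ and $j=\lfloor\beta\rfloor$ cases) is right. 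The ``empty rectangle'' verification you only sketch does go through: writing a rival $\delta=a+b\beta$ and using $\delta-\delta'=b(\beta-\beta')$ with $\beta-\beta'=\sqrt{p^2\mp4}$, one eliminates $b=0$ and $|b|\ge2$ outright and is left for $\Delta_1$ with the single competitor $\pm\beta'+\text{const}$ whose conjugate is exactly $\beta$, and for $\Delta_2$ with the competitors $i-\beta'$, $0\le i\le j-1$, whose conjugates have modulus $\beta-i\ge\phi_{j-1}\ge|\Omega|$ --- precisely the sharp thresholds you name, with the boundary case $|\Omega|=\phi_{j-1}$ handled by the semi-closedness hypothesis. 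The one ingredient you assert rather than establish is the characterization of $\Delta_1$ (resp.\ $\Delta_2$) as the smallest positive element of $\Z[\beta]$ with conjugate in $(0,|\Omega|)$ (resp.\ $(-|\Omega|,0)$): this is not literally contained in Proposition~\ref{p:cap3} as stated, though it is the standard fact from~\cite{GuMaPeBordeaux} and can also be derived from Proposition~\ref{p:cap3} by noting that any smaller $\delta$ with admissible conjugate would insert a point of $\Sigma_\beta(\Omega)$ strictly between some $x$ and its successor $x+\Delta_1$. If you add that justification and write out the lattice computation in the two cases $\beta^2=p\beta\pm1$, your argument is a complete proof.
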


As a consequence, we can provide the proof of Theorem~\ref{t:1}.

\begin{proof}[Proof of Theorem~\ref{t:1}]
As a result of Proposition~\ref{p:classical}, for each
$\delta>0$ there exist $K>0$ such that
$$
[K,+\infty) \cap \Sigma_\beta\big((1-\delta)\Omega\big)
\subset X^m(\beta)\subset\Sigma_\beta(\Omega)\,.
$$
Combining Proposition~\ref{p:cap3} with
Proposition~\ref{p:mezerycap}, we can see that for sufficiently
small $\delta$, the distances between consecutive points in
$\Sigma_\beta\big((1-\delta)\Omega\big)$ and in
$\Sigma_\beta(\Omega)$ take the same three values. Necessarily,
the same three values are taken by distances between elements of
the spectra.
\end{proof}

Apart the values of distances, one can be interested in the
frequency of occurrence of the given value in the gap sequence.
Formally, if the gap sequence is coded by an infinite word
$u=u_0u_1u_2\cdots$ over a finite alphabet ${\mathcal B}$ formed
by symbols each corresponding to a different value of the
distance, then the frequency of the symbol $X\in\B$ in $u$ is
given as the limit
$$
\rho_X=\lim_{n\to\infty} \frac{\#\{0\leq i<n : u_i=X\}}{n}\,,
$$
if it exists. It is a well known fact that frequencies of letters
$A,B,C$ in 3iet words, as defined in Section~\ref{sec:cap}, are
given by the lengths of intervals $I_A=[0,\alpha)$,
$I_B=[\alpha,\beta)$, $I_C=[\beta,1)$ in the corresponding
transformation $T$, cf.\ Definition~\ref{d:3iet}, namely
$$
\rho_A=\alpha\,,\quad \rho_B=\beta-\alpha\,,\quad
\rho_C=1-\beta\,.
$$

The following theorem summarizes the results about exact values of
distances and their frequencies in both classical and modified
spectra. Note that the mentioned finitely many exceptions apply
only in the case of classical spectrum.

\begin{thm}\label{t:explicit}
Let $\beta$ be a quadratic Pisot unit, and let $\A\ni0$ be a
finite set of consecutive integers, $\#\A>\beta$. If
$$
\beta^{l-1}(\beta-1)\cdot\max\{\beta-j,1\}<\#\A-1
\leq\beta^{l-1}(\beta-1)(\beta-j+1)
$$
for $l\geq 0$ and $j\in\{1,\dots,\lfloor\beta\rfloor\}$, then, up
to finitely many exceptions, the distances in $X^\A(\pm\beta)$
take values
$$
\frac1{\beta^l},\quad \frac1{\beta^l}(j-\beta'),\quad
\frac1{\beta^l}(j+1-\beta')
$$
with frequencies
$$
1-\frac{\beta^{l-1}(\beta-1)}{\#\A-1},\quad
1-\frac{\beta^{l-1}(\beta-j)(\beta-1)}{\#\A-1},\quad -1+
\frac{\beta^{l-1}(\beta-j+1)(\beta-1)}{\#\A-1},
$$
where $\beta'$ denotes the Galois conjugate of $\beta$.
\end{thm}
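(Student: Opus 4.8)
The plan is to reduce the theorem to the cut-and-project description already assembled, and to read off both the distances and the frequencies from Propositions~\ref{p:mezerycap} and~\ref{p:cap3} after a single rescaling. The decisive preliminary observation is that in \emph{every} case the acceptance interval has the same length. For the modified spectra $\Omega=\I_{\gamma,\A}^\circ\cup\{0\}$ with $\gamma=\frac1{\pm\beta'}$ and $|\gamma|=\beta$, and evaluating the endpoints of~\eqref{eq:IgammaA} separately for $\gamma>1$ and $\gamma<-1$ gives length $\frac{(A-a)\beta}{\beta-1}$ in both cases; for the classical spectrum the two windows of Proposition~\ref{p:XY} also both have length $\frac{m\beta}{\beta-1}$ (the interval for $\beta^2=p\beta+1$ collapses after writing $\beta^2-1=(\beta-1)(\beta+1)$). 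Since $A-a=\#\A-1$ throughout, I obtain the uniform value
\[
|\Omega|=\frac{(\#\A-1)\beta}{\beta-1}.
\]

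Next I would apply the scaling relation~\eqref{eq:scaling} in the form $\Sigma_\beta(\Omega)=\beta^{-l}\Sigma_\beta\big((\beta')^l\Omega\big)$ to push the window into the range $(1,\beta]$ where Proposition~\ref{p:mezerycap} is stated. Setting $|\Omega_0|:=\beta^{-l}|\Omega|=\frac{\#\A-1}{\beta^{l-1}(\beta-1)}$, there is a unique integer $l\geq0$ with $|\Omega_0|\in(1,\beta]$, because $|\Omega|>1$ (a consequence of $\#\A>\beta$) and the half-open intervals $(\beta^l,\beta^{l+1}]$ partition $(1,\infty)$. A direct rewriting identifies the hypothesis of the theorem with $\phi_j(\beta)<|\Omega_0|\leq\phi_{j-1}(\beta)$, using $\max\{\beta-j,1\}=\phi_j(\beta)$ and $\beta-j+1=\phi_{j-1}(\beta)$; this pins down $j\in\{1,\dots,\lfloor\beta\rfloor\}$. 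Proposition~\ref{p:mezerycap} then gives the distances $1,\ j-\beta',\ j+1-\beta'$ for $\Sigma_\beta(\Omega_0)$, and since multiplication by $\beta>0$ preserves the order of $\R$, the bidirectional gap \emph{word} is unchanged while the metric gap values merely scale by $\beta^{-l}$; this yields exactly $\frac1{\beta^l},\ \frac1{\beta^l}(j-\beta'),\ \frac1{\beta^l}(j+1-\beta')$.

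For the frequencies I would feed the window $\Omega_0$ into Proposition~\ref{p:cap3}. The gaps $\Delta_1=1$, $\Delta_2=j-\beta'$, $\Delta_1+\Delta_2=j+1-\beta'$ are coded by $A,C,B$; here $\Delta_1'=1>0$ and $\Delta_2'=j-\beta<0$ (the $\star$ of Proposition~\ref{p:cap3} being the Galois conjugate), so the associated exchange $T$ has $\lambda=1-\frac1{|\Omega_0|}$ and $\mu=\frac{\beta-j}{|\Omega_0|}$. As $1-\lambda$ and $\mu$ have irrational ratio $\beta-j$, they are linearly independent over $\Q$, and hence the letter frequencies equal the interval lengths, namely $\lambda$ for $A$, $1-\mu$ for $C$, and $\mu-\lambda$ for $B$. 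Substituting $\frac1{|\Omega_0|}=\frac{\beta^{l-1}(\beta-1)}{\#\A-1}$ returns precisely the three listed frequencies, matched to the three distances in the stated order; they are again unaffected by the order-preserving rescaling by $\beta^{-l}$.

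It remains to transfer the conclusion from $\Sigma_\beta(\Omega)$ to the spectrum. For the modified spectra Proposition~\ref{p:modified} gives $X^\A(\alpha)=\Sigma_\beta(\Omega)$ on the nose, so the values and frequencies hold without exception. For the classical spectrum $X^m(\beta)$ the three-value statement is Theorem~\ref{t:1}; the values are those just computed for $\Sigma_\beta(\Omega)$, and the frequencies coincide because the set of discrepancy points has density zero. This last point is where the argument needs the most care: letting $\delta\to0$ in Proposition~\ref{p:classical}, the symmetric difference of $X^m(\beta)$ and $\Sigma_\beta(\Omega)$ inside $[K,+\infty)$ is contained in $\Sigma_\beta(\Omega)\setminus\Sigma_\beta((1-\delta)\Omega)$, whose point density is $O(\delta)$ and hence $0$, so deleting it alters each gap-value count only by $o(N)$ and the three frequencies are preserved. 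Finally the boundary instance $\#\A-1=\beta^{l-1}(\beta-1)(\beta-j+1)$, where $|\Omega_0|=\phi_{j-1}(\beta)$ and Proposition~\ref{p:mezerycap} provides only two gaps, is consistent: the computed frequency $-1+\frac{\beta^{l-1}(\beta-j+1)(\beta-1)}{\#\A-1}$ of the longest gap $\frac1{\beta^l}(j+1-\beta')$ then vanishes.
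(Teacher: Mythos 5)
Your proof is correct and follows essentially the same route as the paper: compute $|\Omega|=\frac{(\#\A-1)\beta}{\beta-1}$ in all cases, rescale via~\eqref{eq:scaling} into $(1,\beta]$, read the distances from Proposition~\ref{p:mezerycap} and the frequencies from the 3iet parameters $\lambda,\mu$ of Proposition~\ref{p:cap3}, and pass to the classical spectrum through Proposition~\ref{p:classical}. You are in fact more explicit than the paper on the one delicate point, namely that the density-zero discrepancy set $\Sigma_\beta(\Omega)\setminus X^m(\beta)$ perturbs only $o(N)$ of the first $N$ gaps and hence cannot alter the frequencies, and your observation that the third frequency vanishes exactly at the boundary $\#\A-1=\beta^{l-1}(\beta-1)(\beta-j+1)$ is a nice consistency check the paper omits.
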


\begin{proof}
Let us first show the statement for the modified spectrum
$X^\A(\alpha)$, $\alpha=\pm\beta$, satisfying assumptions of
Theorem~\ref{t:2}. By Proposition~\ref{p:modified}, the modified
spectrum is equal to the cut-and-project set
$\Sigma_\beta(\Omega)$ for some $\Omega$.
By Proposition~\ref{p:mezerycap}, the distances in
$\Sigma_\beta(\Omega)$ depend only on the length of the interval.
For both $\alpha=\beta$ and $\alpha=-\beta$ we have
$$
|\Omega|=|{\mathcal I}_{\frac1{\beta'}}| = |{\mathcal I}_{-\frac1{\beta'}}|
= \frac\beta{\beta-1}(A-a) = \frac{\beta(\#\A-1)}{\beta-1}\,,
$$
as can be verified for both $\beta^2=p\beta+1$ and
$\beta^2=p\beta-1$ using~\eqref{eq:IgammaA}. By
Proposition~\ref{p:mezerycap} and the scaling
property~\eqref{eq:scaling}, we need to find $l\in\N$ for which
$\beta^l<|\Omega|\leq \beta^{l+1}$, and find to which interval
$\big(\phi_j(\beta),\phi_{j-1}(\beta)\big]$ the length
$|\Omega|\beta^{-l}$ belongs.

The frequencies are derived easily by finding the parameters
$\alpha$, $\beta$ of intervals in the corresponding exchange of
three intervals. The identification of cut-and-project set with
3iet words is given in Proposition~\ref{p:cap3}. Recall that the
isomorphism $\star$ applied on $\Delta_1,\Delta_2\in\Z[\beta]$ is
now the Galois automorphism in the field $\Q(\beta)$.

In order to conclude the demonstration of the statement for the
classical spectra $X^m(\beta)$, it suffices to realize that by the
proof of Theorem~\ref{t:1}, the values of distances (up to
finitely many exceptions) in $X^m(\beta)$ coincide with those in
$\Sigma_\beta(\Omega)$, where again $|\Omega|=|{\mathcal
I}_{\frac1{\beta'}}|$.
\end{proof}

\section{Comments}\label{sec:remarks}

\begin{itemize}
\item In Section~\ref{sec:representace}, we have considered
representations of numbers in general bases $\gamma$, $|\gamma|>1$
and arbitrary alphabets of consecutive integers containing 0. Let
us mention that in case of positive base and the digit set
$\A=\{0,1,\dots,m\}$, $m>\gamma-1$, the transformation $T$ given by
Lemma~\ref{l:oatraktoru} restricted to the attractor is
 homothetic to the transformation $t:[0,1)\to[0,1)$ providing the
 greedy expansion according to R\'enyi~\cite{Renyi}.

 Similarly, if the base is $\gamma=-\beta<-1$, and the alphabet is
$\A=\{0,1,\dots,m\}$, $m>\beta-1$, then $T$ restricted to the
attractor corresponds to the transformation
 $t:\big[-\frac{\beta}{\beta+1},\frac1{\beta+1}\big)\to\big[-\frac{\beta}{\beta+1},\frac1{\beta+1}\big)$, as considered by Ito and Sadahiro~\cite{ItoSadahiro}.


\item Algorithms for arithmetic operations in systems with base
$\alpha$ and digit set $\A\subset\Z$, $0\in\A$, usually work with
numbers having finitely many non-zero digits, formally, belonging
to the set
$$
{\rm fin}_\A(\alpha)=\Big\{\sum_{i\in J}a_i\alpha^i :
J\subset\Z,\, J\text{ finite},\, a_i\in \A\Big\} =
\bigcup_{k\in\N}\alpha^{-k}X^\A(\alpha)\,.
$$
If $\alpha$ is an algebraic unit, then ${\rm fin}_\A(\alpha)
\subset \Z[\alpha]$. Essential is the knowledge whether ${\rm
fin}_\A(\alpha)$ is closed under addition and subtraction.

It can be derived that whenever $\alpha=\pm\beta$, $\beta>1$, is a
quadratic unit and $X^\A(\alpha)=\Sigma_\beta(\Omega)$, then ${\rm
fin}_\A(\alpha)$ is closed under addition. If, moreover, $0$ lies
in the interior of the interval $\Omega$, then ${\rm
fin}_\A(\alpha) = \Z[\alpha]$ and thus ${\rm fin}_\A(\alpha)$ is
closed under both, addition and subtraction. The cases of $\alpha$
and $\A$ where this happens can be read in
Proposition~\ref{p:modified}.
\end{itemize}

\section*{Acknowledgements}

This work was supported by the Czech Science Foundation, grant No.\ 13-03538S. The second author acknowledges financial support of
the Grant Agency of CTU in Prague, grant No.\ SGS11/162/OHK4/3T/14.


\end{document}